\theoremstyle{plain}
\newtheorem{thm}{Theorem}
\newtheorem{lem}{Lemma}
\theoremstyle{definition}
\newenvironment{pf}
{\begin{proof}} {\end{proof}}
\newcommand{\disp}{\displaystyle}
\DeclareMathOperator{\bigO}{\Cal{O}}
\newcommand{\eps}{\varepsilon}
\newcommand{\vp}{\varphi}
\newcommand{\be}{\beta}
\newcommand{\ga}{\gamma}
\newcommand{\De}{\Delta}
\newcommand{\la}{\lambda}
\newcommand{\La}{\Lambda}
\newcommand{\nid}{\noindent}
\newcommand{\iny}{\infty}
\newcommand{\LP}{\Delta}
\newcommand{\gr}{\nabla}
\newcommand{\pri}{\prime}
\newcommand{\norm}[1]{\left\vert \left\vert #1\right\vert\right\vert}
\newcommand{\abs}[1]{\left\vert#1\right\vert}
\newcommand{\set}[1]{\left\{#1\right\}}
\newcommand{\brac}[1]{\left[#1\right]}
\newcommand{\pr}[1]{\left( #1 \right) }
\newcommand{\ang}[1]{\langle #1 \rangle }
\newcommand{\Cal}[1]{\ensuremath{\mathcal{#1}}}
\newcommand{\N}{\ensuremath{\mathbb{N}}}
\newcommand{\R}{\ensuremath{\mathbb{R}}}
\newcommand{\Z}{\ensuremath{\mathbb{Z}}}
\newcommand{\C}{\ensuremath{\mathbb{C}}}
\numberwithin{equation}{section}
\numberwithin{lem}{section}
\numberwithin{cor}{section}
\date{}
\begin{document}
\author{Blair Davey}
\address{Department of Mathematics, University of Minnesota}
\email{edavey@umn.edu}

\title{A Meshkov-type construction for the borderline case}
\maketitle

\begin{abstract}
We construct functions $u: \R^2 \to \C$ that satisfy an elliptic eigenvalue equation of the form  $-\LP u + W \cdot \gr u + V u = \la u$, where $\la \in \C$, and $V$ and $W$ satisfy $\abs{V\pr{x}} \lesssim \ang{x}^{-N}$, and $\abs{W\pr{x}} \lesssim \ang{x}^{-P}$, with $\min\set{N, P} = 1/2$.  For $\abs{x}$ sufficiently large, these solutions satisfy $\abs{u(x)} \lesssim \exp\pr{- c\abs{x}}$.  In the author's previous work, examples of solutions over $\R^2$ were constructed for all $N, P$ such that $\min\set{N,P} \in [0, 1/2)$. These solutions were shown to have the optimal rate of decay at infinity. A recent result of Lin and Wang shows that the constructions presented in this note for the borderline case of $\min\set{N, P} = 1/2$ also have the optimal rate of decay at infinity. \\

\end{abstract}

\section{Introduction}

In this note, we follow up on previous work and address the missing cases of constructions of solutions in $\R^2$ that satisfy an elliptic eigenvalue equation and have the optimal rate of decay at infinity.  Previously, sharp constructions of solutions corresponding to values of $\be_c > 1$ and $\be_c < 1$ were presented.  (The definition of $\be_c$ is given below.)  These sharp constructions were presented in \cite{D13} in conjunction with quantitative unique continuation estimates for eigenfuctions of the magnetic Schr\"odinger operator.  For $\be_c > 1$, the optimal constructions mimic those of Meshkov \cite{M89} and hold only in dimension $2$.  For $\be_c < 1$, sharp radial constructions exist in any dimension greater than 1.  We will show that a modification of the Meshkov-type constructions from \cite{D13} (that only worked for $\be_c > 1$) gives rise to a set of constructions at the borderline case of $\be_c = 1$.  A modification to the definitions of certain cutoff functions leads to a simplified estimate, which in turn allows us to push a construction that only worked for $\be_c >1$ down to $\be_c =1$.  

The main theorem of Lin and Wang in \cite{LW13} is a generalization of the quantitative estimates from \cite{D13}, and it holds for any value of $\be_c$.  In particular, the theorem from \cite{LW13} holds when $\be_c = 1$, the case that is missing from \cite{D13}.  These new constructions are especially interesting because they prove that the result of Lin and Wang is sharp when $\be_c = 1$.

To understand the importance of the new constructions, we will first summarize the main results from \cite{D13}.  Recall that $\langle x \rangle = \sqrt{1 + \abs{x}^2}$.  Let $\la \in \C$ and suppose that $u$ is a solution to 
\begin{equation}
- \LP u + W\cdot \gr u + Vu = \la u  \;\; \textrm{in} \;\; \R^n,
\label{epde}
\end{equation} 
where 
\begin{equation}
|V(x)| \le A_1{\langle x\rangle^{-N}}, 
\label{vBd}
\end{equation}
\begin{equation}
|W(x)| \le A_2{\langle x\rangle^{-P}},
\label{wBd}
\end{equation}
for $N, P, A_1, A_2 \ge 0$.  Assume also that $u$ is bounded,
\begin{equation}
\norm{u}_{\iny} \le C_0,
\label{uBd}
\end{equation}
and normalized,
\begin{equation}
u(0) \ge 1.
\label{L2lBd}
\end{equation}
Define $\be_c = \disp \max\set{2 - 2P, \frac{4-2N}{3}}$, $\be_0 = \max\set{\be_c, 1}$.  For large $R$, let
$$\mathbf{M}(R) = \inf_{|x_0| = R}\norm{u}_{L^2\pr{B_1(x_0)}}.$$

The following theorem is the main result of \cite{D13}.

\begin{thm}
Assume  that the conditions described above in (\ref{epde})-(\ref{L2lBd}) hold.  Then there exist constants $\tilde C_5(n)$, $C_6\pr{n, N, P}$, $C_7\pr{n, N, P, A_1, A_2}$, $R_0\pr{n, N, P, \la, A_1, A_2, C_0}$, such that for all $R \ge R_0$,
\begin{enumerate}[(a)]
\item if $\be_c > 1$ ($\be_0 = \be_c$), then
\begin{equation}
\mathbf{M}(R) \ge \tilde C_5\exp\pr{-C_7 R^{\be_0}(\log R)^{C_6}},
\label{Mesta}
\end{equation}
\label{partA}
\item if $\be_c < 1$ ($\be_0 = 1$), then
\begin{equation}
\mathbf{M}(R) \ge \tilde C_5\exp\pr{-C_7 R(\log R)^{C_6\log\log R}}.
\label{Mestb}
\end{equation}
\label{partB}
\end{enumerate}  
\label{MEst}
\end{thm}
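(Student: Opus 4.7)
The plan is to prove Theorem \ref{MEst} via a weighted Carleman estimate for the operator $-\Delta + W\cdot \nabla + V - \la$ combined with a propagation-of-smallness chaining argument. First I would rescale to unit scale by setting $\tilde u(y) = u(Ry)$, so that on the unit annulus $1 \le |y| \le 4$ the function $\tilde u$ satisfies
\begin{equation*}
-\Delta \tilde u + \tilde W \cdot \nabla \tilde u + \tilde V \tilde u = R^2 \la \tilde u,
\end{equation*}
with $|\tilde W(y)| \lesssim R^{1-P}$ and $|\tilde V(y)| \lesssim R^{2-N}$. A standard Carleman inequality with radial weight $e^{\tau \phi(|y|)}$ for a suitably convex $\phi$ (say $\phi(r) = -\log r + c(\log r)^2$) then produces
\begin{equation*}
\tau^{3}\!\int e^{2\tau \phi} |v|^2 + \tau\!\int e^{2\tau \phi} |\nabla v|^2 \le C\!\int e^{2\tau \phi} |\Delta v|^2
\end{equation*}
for $v$ compactly supported in the annulus. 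The parameter $\tau$ is chosen large enough that the potential and drift contributions can be absorbed into the left side: $\tau^3 \gtrsim (R^{2-N})^2$ from the $V\tilde u$ term and $\tau \gtrsim (R^{1-P})^2$ from the $\tilde W\cdot \nabla \tilde u$ term, which together force $\tau \sim R^{\be_c}$ with $\be_c = \max\set{2-2P,(4-2N)/3}$.

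Next, I would apply the Carleman estimate with $v = \eta \tilde u$ for an annular cutoff $\eta$. The commutator $[\Delta,\eta]\tilde u$ lives on two thin collars, producing a three-region inequality of the shape
\begin{equation*}
\norm{\tilde u}_{L^2(A_{\text{mid}})} \le C e^{-c\tau}\norm{\tilde u}_{L^2(A_{\text{far}})} + e^{C\tau}\norm{\tilde u}_{L^2(A_{\text{near}})}.
\end{equation*}
When $\be_c>1$, a single rescaling suffices: chaining this three-region inequality along a sequence of overlapping balls from near the origin (where $u(0)\ge 1$ and interior regularity yield $\norm{u}_{L^2(B_1(0))}\gtrsim 1$) out to $B_1(x_0)$, and controlling the far piece via $\norm{u}_\iny \le C_0$, the optimized choice $\tau \sim R^{\be_c}$ produces the bound (\ref{Mesta}), with the $(\log R)^{C_6}$ factor tracking losses from the cutoff commutators and from discretizing the chain.

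In the case $\be_c<1$, the parameter $\tau$ cannot be taken so small: the weight $e^{\tau\phi}$ must still dominate the bounded eigenvalue term after rescaling and the cutoff commutator errors, which forces an effective floor $\tau \gtrsim \log R$. A single rescaling therefore under-exploits the estimate, and instead I would iterate the three-region inequality across logarithmically spaced scales between $1$ and $R$; each application contributes a multiplicative factor of $(\log R)^{C_6}$, and since there are $\sim \log R$ such applications, they compound to the double-logarithmic exponent $(\log R)^{C_6 \log\log R}$ in (\ref{Mestb}). The main obstacle in both regimes is designing $\phi$ so that the Bourgain--Kenig $(4-2N)/3$ exponent for $V$ and the $2-2P$ exponent for $W$ are simultaneously attained with the same weight; this is precisely what makes $\be_c$ the natural threshold separating the two cases and what leaves the borderline $\be_c=1$ case, addressed by the constructions of the present note, sitting at the meeting point of the two regimes.
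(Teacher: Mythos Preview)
The paper does not contain a proof of Theorem~\ref{MEst}. It is stated there explicitly as ``the main result of \cite{D13}'' and is quoted only to provide context for the constructions in Theorem~\ref{consBe}; the present note proves nothing about lower bounds for $\mathbf{M}(R)$. So there is no proof in this paper to compare your proposal against.

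That said, your outline is the standard Carleman/three-regions route used in \cite{D13} (and in the Bourgain--Kenig line of work it descends from), and the heuristic for why $\be_c=\max\{2-2P,(4-2N)/3\}$ emerges---balancing $\tau^3$ against $|\tilde V|^2$ and $\tau$ against $|\tilde W|^2$---is correct. One point in your sketch is loose: after rescaling, the eigenvalue term becomes $R^2\la\,\tilde u$, which is of size $R^2$ and is not dominated merely by taking $\tau\gtrsim\log R$. In the actual argument this term is handled not by making $\tau$ large but by building oscillation into the Carleman weight (or equivalently by working with $e^{\tau\phi}$ where $\phi$ incorporates the frequency $\sqrt{\la}\,R$), so that the leading contribution of $R^2\la$ cancels rather than being absorbed. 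Your explanation of the $(\log R)^{C_6\log\log R}$ loss in case~(b) is also somewhat schematic: the double logarithm arises from iterating a doubling/three-ball inequality over $\sim\log R/\log\log R$ scales with a loss that itself depends on the scale, not simply from $\log R$ identical steps. If you intend to write this up, those two points would need to be made precise; otherwise the skeleton is right.
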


Notice that Theorem \ref{MEst} does not address the case of $\be_c = 1$.  In \cite{LW13}, the authors applied the methods used to prove Theorem \ref{MEst} and established a more general version of that theorem. In doing so, they proved the appropriate estimate for the missing case of $\be_c = 1$.  The following is a statement of a specific case of Theorem 1.1 from \cite{LW13}:

\begin{thm}
Assume  that the conditions described above in (\ref{epde})-(\ref{L2lBd}) hold.  Then there exist constants $\tilde C_5(n)$, $C_6\pr{n, N, P}$, $C_7\pr{n, N, P, A_1, A_2}$, $R_0\pr{n, N, P, \la, A_1, A_2, C_0}$, such that for all $R \ge R_0$,
 if $\be_c = 1$, then
\begin{equation}
\mathbf{M}(R) \ge \tilde C_5\exp\pr{-C_7 R (\log R)^{C_6 \ga\pr{R}}},
\end{equation}
where $\disp \ga\pr{R} = \frac{ \pr{\log R} \pr{\log\log\log R}}{\pr{\log \log R}^2}$.
\label{LWC}
\end{thm}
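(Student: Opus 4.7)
The plan is to follow the Carleman-estimate strategy already used to obtain Theorem \ref{MEst}, but with a weight function tuned to the borderline scaling $\be_c = 1$. The core tool is a weighted $L^2$ inequality of the form
\[
\tau^{3}\int e^{2\tau\phi}|f|^{2} + \tau\int e^{2\tau\phi}|\gr f|^{2}
\le C\int e^{2\tau\phi}|\LP f|^{2},
\]
valid for $f \in C^{\iny}_{c}(\R^n\setminus\set{0})$ and $\tau$ large, where $\phi = \phi_{\tau}(|x|)$ is a radial weight whose precise form depends on $\be_c$. For $\be_c > 1$ one takes $\phi(r) \sim r^{\be_c}$ with a logarithmic perturbation; for $\be_c < 1$ one takes $\phi(r) \sim r$. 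At $\be_c = 1$ both choices degenerate, and the weight must be chosen as $\phi(r) \sim r\,h(\log r)$ for a slowly growing $h$ in order to keep the coercivity in the subprincipal terms that absorb $W\cdot \gr u$ and $Vu$.

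Next I would apply this Carleman inequality to $f = \chi u$, where $\chi$ is a smooth cutoff equal to $1$ on an annulus $\set{r_1 \le |x| \le r_2}$ and supported in $\set{\tfrac12 r_1 \le |x| \le 2r_2}$. Using \eqref{epde}, $\LP(\chi u) = \chi\,(W\cdot\gr u + Vu - \la u) + 2\gr\chi\cdot\gr u + u\LP\chi$, so the right-hand side of the Carleman estimate splits into (i) a $(V,W,\la)$-term that is absorbed into the left-hand side by invoking \eqref{vBd}, \eqref{wBd} together with the decay $N, P \ge 1/2$, and (ii) boundary-layer terms concentrated in thin annuli near $r_1$ and $r_2$. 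The normalization \eqref{L2lBd} and a Caccioppoli-type argument produce a lower bound on $\norm{u}_{L^2}$ on the inner annulus, while \eqref{uBd} controls the contribution of the outer annulus.

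Choosing $r_2 \sim R$ and then optimizing in $\tau$, $r_1$, and the scale of the cutoff yields the lower bound $\mathbf M(R) \ge \tilde C_5 \exp\pr{-C_7 R (\log R)^{C_6\,\Gamma(R)}}$, where $\Gamma(R)$ is the exponent that arises from the joint optimization of three scales: the exponential weight, the logarithmic perturbation of $\phi$, and the width of the cutoff annulus. The combinatorial balance of these three scales at the critical index $\be_c = 1$ is exactly what produces
\[
\ga(R) \;=\; \frac{(\log R)(\log\log\log R)}{(\log\log R)^{2}}.
\]

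The main obstacle is the Carleman step at the borderline: one needs a weight $\phi$ strong enough that the $\tau^3$-coercivity dominates the $V$-term (which is only $\langle x\rangle^{-1/2}$ small) and simultaneously the $\tau$-coercivity in $|\gr f|^2$ dominates the $W$-term. The standard polynomial weights used for $\be_c>1$ lose this margin at $\be_c=1$, so the right $h(\log r)$ has to be constructed, and its triple-logarithmic structure is what forces the form of $\ga(R)$. Once the Carleman inequality with the correct borderline weight is in hand, the rest of the argument is a direct adaptation of the three-region scheme from \cite{D13}.
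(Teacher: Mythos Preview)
The paper does not prove Theorem~\ref{LWC} at all: it is quoted as a special case of Theorem~1.1 of Lin and Wang \cite{LW13}, and the present note only supplies the \emph{constructions} (Theorem~\ref{consBe}) that show that result to be sharp. So there is no proof here to compare your proposal against.

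That said, your outline is consistent with what the paper reports about \cite{LW13}, namely that Lin and Wang ``applied the methods used to prove Theorem~\ref{MEst}'' --- i.e., Carleman estimates in the style of \cite{D13} --- to reach the borderline case. Your three-region scheme (cutoff, absorption of the $V$ and $W$ terms using the decay $N,P\ge 1/2$, optimization in $\tau$) is the right architecture. However, as written it is a plan and not a proof: you do not actually exhibit the weight $\phi(r)=r\,h(\log r)$, you do not verify the convexity/positivity conditions on $\phi$ that make the Carleman inequality hold with the stated powers of $\tau$, and the final assertion that ``the combinatorial balance of these three scales \ldots\ is exactly what produces $\ga(R)$'' is a claim, not a computation. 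The specific triple-logarithmic form $\ga(R)=(\log R)(\log\log\log R)/(\log\log R)^2$ does not fall out of a generic optimization; it comes from an iterated choice of parameters that you would need to carry out explicitly. If you want this to stand as a proof, you must construct $h$, check the Carleman commutator terms at $\be_c=1$, and perform the optimization in detail.
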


By examining Theorem \ref{MEst}, we see that, up to logarithmic factors, Theorem \ref{LWC} is precisely the estimate that one would expect to be true for the case of $\be_c = 1$.  

The following theorem from \cite{D13} shows that, under certain conditions, there are constructions that prove that Theorem \ref{MEst} is sharp (up to logarithmic factors).  

\begin{thm}
For any $\la \in \C$, $N, P \ge 0$ chosen so that either 
\begin{enumerate}[(a)]
\item $\disp \be_0 = \be_c > 1$ and $n = 2$ or 
\label{consa}
\item $\be_c < 1$ and $\la \notin \R_{\ge 0}$,
\label{consb}
\end{enumerate}
there exist complex-valued potentials $V$ and $W$ (exactly one of which is equal to zero) and a non-zero solution $u$ to (\ref{epde}) such that 
\begin{equation}
|V(x)| \le C{\langle x\rangle^{-N}},
\label{vBd2}
\end{equation}
\begin{equation}
|W(x)| \le C{\langle x\rangle^{-P}}.
\label{wBd2}
\end{equation}
Furthermore, $$|u(x)| \le C\exp\pr{-c|x|^{\be_0}\pr{\log |x|}^A},$$
for some constant $A \in \set{-1, 0}$.
\label{cons}
\end{thm}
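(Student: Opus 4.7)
The plan is to handle cases (a) and (b) by fundamentally different constructions: case (a) is a Meshkov-type angular construction specific to $\R^2$, while case (b) is a radial construction that essentially uses $\la \notin \R_{\ge 0}$.

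For case (a), I would subdivide $\R^2$ into concentric annuli $A_k = \set{r_k \le |x| \le r_{k+1}}$, with radii $r_k$ chosen so that the cumulative radial decay across the annuli is of order $\exp\pr{-c|x|^{\be_0}}$. On each annulus, prescribe $u$ in polar coordinates by $u(r,\theta) = f_k(r)\, e^{i n_k \theta}$, where $n_k \in \Z$ is an angular frequency that increases with $k$. The radial profile $f_k$ is chosen so that $u$ solves (\ref{epde}) on the interior of $A_k$ with either $W = 0$ and a potential $V$ of the right magnitude, or $V = 0$ and a drift $W$ of the right magnitude. The rate of growth of $n_k$ relative to $r_k$ is dictated by the defining relation $\be_c = \max\set{2-2P, \pr{4-2N}/3}$, since the decay of $|u|$, the frequency $n_k$, and the admissible size of the coefficient are all coupled through the polar form of the Laplacian $\LP u = \partial_r^2 u + r^{-1}\partial_r u + r^{-2}\partial_\theta^2 u$. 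On thin transition shells between consecutive annuli, interpolate between $e^{i n_k \theta}$ and $e^{i n_{k+1}\theta}$ using a smooth cutoff, and absorb the resulting error into an enlarged $V$ (or $W$) supported in those shells.

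For case (b), I would look for a radial solution $u(x) = f(|x|)$ with $f(r) \approx e^{-cr}$. The PDE then reduces to a one-variable ODE
\begin{equation*}
-f''(r) - \frac{n-1}{r}f'(r) + w(r) f'(r) + v(r) f(r) = \la f(r),
\end{equation*}
where $W(x) = w(|x|)\, \hat r$ and $V(x) = v(|x|)$. Setting exactly one of $v, w$ to zero and solving algebraically for the other yields an explicit formula. Choosing $c \in \C$ with positive real part, the hypothesis $\la \notin \R_{\ge 0}$ is what permits a choice of $c$ for which the residual coefficient decays at exactly the rate $\ang{r}^{-N}$ (respectively $\ang{r}^{-P}$). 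The range $\be_c < 1$ is equivalent to the conditions $N > 1$ and $P > 1/2$, built directly into the definition of $\be_c$.

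The hardest part is the transition-layer estimate in case (a). In the thin shell where the angular frequency jumps from $n_k$ to $n_{k+1}$, $|u|$ can dip substantially while simultaneously $\LP u - W\cdot\gr u + \la u$ must remain comparable to $|u|$ times the allowed size of $V$ (and analogously when solving for $W$). This delicate cancellation is where the condition $\be_c > 1$ enters: it ensures that the transition shells can be made thick enough that the smooth cutoff does not overwhelm the construction. Pushing the construction down to $\be_c = 1$, which is the subject of the present note, requires the modified choice of cutoff alluded to in the introduction.
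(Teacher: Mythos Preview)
This theorem is not proved in the present paper; it is quoted from the author's earlier work \cite{D13}. The paper instead proves the borderline analogue (Theorem~\ref{consBe}), and its proof via Lemmas~\ref{meshN}--\ref{meshNX} exhibits the actual Meshkov mechanism against which your sketch should be compared.

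Your outline for case~(a) is correct at the coarsest level---concentric annuli, increasing angular frequencies $n_k$, transition shells---but it misidentifies the transition step, and that is where the content lies. A direct cutoff interpolation of the form $\psi_1 f_k(r) e^{in_k\vp} + \psi_2 f_{k+1}(r) e^{in_{k+1}\vp}$ vanishes on the set where the two summands cancel, and there $V = (\LP u + \la u)/u$ blows up. The Meshkov device, visible in the proof of Lemma~\ref{meshN}, circumvents this by passing through a chain of intermediate functions within each annulus: $r^{-n}e^{-in\vp}\mu_n$ is first replaced by $-br^{-n+2k}e^{iF(\vp)}\mu_{n-2k}$ with $F(\vp) = (n+2k)\vp + \Phi(\vp)$, where $\Phi$ is a specially built $T$-periodic phase that forces $|u|$ to stay bounded below on the dangerous middle sub-annulus (Step~1B and estimate~(\ref{1CuN}) in the paper); the phase is then straightened to $(n+2k)\vp$, the radial exponent flipped from $-n+2k$ to $-n-2k$, and only then does a second transition reach $ar^{-(n+k)}e^{-i(n+k)\vp}\mu_{n+k}$. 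Each of these four steps carries its own estimate. Your sketch collapses all of this into a single cutoff between consecutive pure harmonics, which is exactly the obstruction Meshkov's construction was designed to overcome; as written, the proposal does not explain how $|u|$ is kept away from zero in the shell, and without that the bound on $V$ (or $W$) fails.

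Your plan for case~(b) is the right one: a radial ansatz with a complex exponential rate, using $\la\notin\R_{\ge 0}$ to arrange the required decay of the residual coefficient, is indeed how \cite{D13} handles $\be_c<1$.
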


The next theorem, the main result of this note, shows that Theorem \ref{LWC} is sharp and therefore fills in all of the remaining gaps for $n =2$.  We will adapt the methods from \cite{D13}, based on those from \cite{M89}, to prove the following.

\begin{thm}
For any $\la \in \C$, $N, P \ge 0$ chosen so that  $\disp \be_0 = \be_c=1$ and $n = 2$, we have the following:
\begin{enumerate}[(a)]
\item If $\disp \be_0 = \frac{4-2N}{3} = 1$, then there exists a potential $V$ and an eigenfunction $u$ such that
\begin{equation}
\LP u + \la u = V u,
\label{VPDE}
\end{equation}
where 
\begin{align}
\abs{V\pr{x}} &\le C \log \abs{x} \ang{x}^{-1/2}
\label{Vbd} 
\end{align}
\item  If $\be_0 = 2-2P = 1$, then there exists a potential $W$ and an eigenfunction $u$ such that
\begin{equation}\LP u + \la u = W \cdot \gr u,
\label{WPDE}
\end{equation}
where
\begin{align}
\abs{W\pr{x}} &\le C \ang{x}^{-1/2}
\label{Wbd}
\end{align}
\item If $\disp \be_0 = \frac{4-2N}{3} = 1$ and $\la > 0$, then there exists a potential $V$ and an eigenfunction $u$ such that (\ref{VPDE}) holds with
\begin{align}
\abs{V\pr{x}} &\le C \ang{x}^{-1/2}.
\label{VXbd} 
\end{align}
\end{enumerate}
In all cases, 
\begin{equation}
|u(x)| \le C\exp\pr{-c|x|}.
\label{uBd}
\end{equation}
\label{consBe}
\end{thm}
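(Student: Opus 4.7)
The plan is to adapt the Meshkov-type construction from \cite{D13, M89} to the critical case $\be_0 = 1$. I will carry out the construction in annuli $A_k = \{R_k \le |x| \le R_{k+1}\}$, on each of which $u$ is built from a superposition of a ``holomorphic'' and an ``antiholomorphic'' harmonic piece, with smooth cutoff functions glueing successive annuli. Because the target decay $|u(x)|\lesssim e^{-c|x|}$ is only single-exponential, the radii are chosen by a recursion $R_{k+1}=R_k+h_k$ with $h_k$ growing slowly in $R_k$ (essentially logarithmically), so that the number of annuli inside $\{|x|\le R\}$ is of order $R/\log R$.

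On the interior of each $A_k$ I would set, in polar coordinates,
\[
u_k(re^{i\te}) = \alpha_k\, r^{n_k} e^{-cr} e^{in_k\te} + \be_k\, r^{n_k} e^{-cr} e^{-in_k\te},
\]
with frequencies $n_k$ and amplitudes $\alpha_k,\be_k$ chosen so that the two summands are balanced in modulus at the boundaries $|x|=R_k, R_{k+1}$, and so that the global envelope $\exp(-c|x|)$ is matched across successive annuli. Transitions between $A_k$ and $A_{k+1}$ are effected by cutoffs $\chi_k\in C^\infty$; the modification advertised in the introduction consists of choosing these cutoffs so that their mass is concentrated on a subinterval of width tuned precisely to the size of $u$ in the transition, which simplifies the estimate of the commutator term $2\gr\chi_k\cdot\gr u_k + (\LP\chi_k)u_k$.

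For (a), the potential is then defined by $V = (\LP u + \la u)/u$ on the support of $\gr\chi_k$; bounding the numerator against the lower bound on $|u|$ in the transition zone gives $|V(x)| \lesssim \log|x|\cdot\ang{x}^{-1/2}$, which is (\ref{Vbd}). For (b), one uses a similar $u$ but extracts $W$ from the representation $W\cdot\gr u = \LP u + \la u$, taking $W$ proportional to $\overline{\gr u}/|\gr u|^2$; the same cutoff analysis then yields $|W|\lesssim\ang{x}^{-1/2}$ as in (\ref{Wbd}). For (c), the hypothesis $\la > 0$ allows one to replace the pure exponential envelope $e^{-cr}$ with a Hankel-type radial profile that solves the radial Helmholtz equation exactly; this absorbs the $\la u$ term and removes the logarithmic loss, giving $|V|\lesssim\ang{x}^{-1/2}$ as in (\ref{VXbd}).

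The main obstacle lies in the transition zones, where the holomorphic and antiholomorphic pieces of $u_k$ can cancel to first order, making $|u|$ much smaller than either individual amplitude and threatening to blow up the quotient $\LP u/u$ that defines $V$. Balancing the annular widths $h_k$, the frequencies $n_k$, and the derivative bounds of the modified cutoffs against one another is precisely the content of the ``simplified estimate'' referred to in the introduction; it is this balance that pushes the construction from the previously accessible range $\be_c > 1$ down to the borderline $\be_c = 1$ without enlarging the pointwise bound on $V$ or $W$ beyond $\ang{x}^{-1/2}$ (with at most a logarithmic loss in case (a)).
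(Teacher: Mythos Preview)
Your proposal shares the broad Meshkov-type outline with the paper (annular construction, angular-frequency shift, cutoffs in transition zones), but several of the concrete choices are wrong and would not produce the claimed bounds.

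\textbf{Annulus width and building blocks.} The paper takes annuli of width $6\sqrt{\rho}$, not logarithmic width, and uses building blocks of the form $r^{-n}e^{-in\vp}\mu_n(r)$ with $n\sim 2\sqrt{\La}\rho$ (so $n$ grows \emph{linearly} in $r$). The exponential decay $|u|\lesssim e^{-cr}$ emerges only after piecing annuli together, because $\frac{d}{dr}\ln(r^{-n}\mu_n(r))\sim -n/r\sim -c$; there is no explicit $e^{-cr}$ factor in the local ansatz. Your blocks $r^{n_k}e^{-cr}e^{\pm in_k\te}$ are neither harmonic nor approximate Helmholtz eigenfunctions, so $\LP u+\la u$ would be large already on the interior of each annulus, not just on the transition zones. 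With logarithmic annulus widths the frequency increment per annulus would also be far too small to realize the $e^{-cr}$ envelope.

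\textbf{The key cutoff modification.} You describe it as concentrating the cutoff mass on a tuned subinterval. The actual modification in the paper is simpler and different: one takes the two cutoffs $\psi_1,\psi_2$ to satisfy $\psi_1+\psi_2\le 1$ (each passing through $\tfrac12$ on a common middle plateau). This yields $m(r)\le M(r)$ \emph{without} the extra $\ln 2$ that appeared in \cite{D13}, and that is precisely what allows the per-annulus estimate $\ln m(r)-\ln m(\rho)\le -c(r-\rho)$ to sum to $-cr$ at the borderline $\be_c=1$.

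\textbf{Part (c).} The removal of the logarithm when $\la>0$ is not achieved via a Hankel-type profile. The paper keeps the same $\mu_n$-ansatz but shifts the frequency to $n=\big\lfloor\sqrt{\la}(\rho+8\sqrt{\rho})\big\rfloor$; then $\sqrt{(n+ak)^2-\la r^2}/r=\bigO(r^{-1/4})$ on the annulus, which kills exactly the term that previously forced the $\log r$ loss in the bound for $V$.

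In short, the annular scaling, the local ansatz, and the nature of the cutoff refinement are all different from what you propose, and your version would not close: the interior potential would not be small, and the decay would not accumulate correctly.
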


This article is organized as follows.  In \S \ref{ProofDesc}, the general approach to the proof of Theorem \ref{consBe} is described.  In particular, the statements of all necessary lemmas are presented and their application is indicated.  \S \ref{prop} assumes Lemmas \ref{meshN}, \ref{meshP} and \ref{meshNX} to prove Theorem \ref{consBe}(a), (b) and (c), respectively.  Each lemma is then proved in a separate section.  In \S \ref{lemN}, the proof of Lemma \ref{meshN} is presented.  This lemma gives constructions on an annulus for the case of $\be_c = \frac{4-2N}{3} = 1$.  The next section, \S \ref{lemP}, proves the corresponding lemma for $\be_c = 2 - 2P = 1$.  And in \S \ref{lemNX}, we prove a lemma that shows that under additional assumptions on the eigenvalue, $\la$, we may remove the logarithmic term from estimate (\ref{Vbd}).  

%
%
\section{The description of the proof of Theorem \ref{consBe}}
\label{ProofDesc}

Theorem \ref{consBe} is proved with a Meshkov-type construction.   To give the constructions for Theorem \ref{consBe}, we first construct solutions on annular regions.  The annular constructions are described in the lemmas below.  Once the lemmas have been established, the proof of Theorem \ref{consBe} consists of showing that the solutions on annuli can be put together to give solutions over all of $\R^2$ with the appropriate decay properties.  

For $\la \in \C$, use the principal branch to define
$$ \mu_n( r ) := \exp\pr{n\brac{\log\pr{\sqrt{1 - \frac{\la r^2}{n^2}}+1}- \sqrt{1 - \frac{\la r^2}{n^2}} - \log 2 + 1}}.$$
As we will specify below, $n \sim 2\sqrt{\La} r$, so $ \abs{\frac{\la r^2}{n^2}} < 1$. It follows that $ \Re \pr{1 - \frac{\la r^2}{n^2}} > 0$, so all square root terms are well defined (and have positive real part) with this choice of branch cut.  Since the argument for the logarithmic term has real part greater than 1, that term, and hence the function $\mu_n$, is well defined with this branch choice.
A power series expansion of the exponent gives
$$\mu_n\pr{r} = \exp\pr{\frac{\la r^2}{4n} + \frac{\la^2 r^4}{32 n^3} + \frac{\la^3 r^6}{96 n^5} + \ldots}.$$
Whenever $ \abs{\frac{\la r^2}{n^2}} < 1$, the power series in the exponent converges everywhere.

For the case described in Lemma \ref{meshNX} below, $ 1 -\frac{\la r^2}{n^2} > 0$, so again, all terms are well-defined.

The following lemma leads to the proof of Theorem \ref{consBe}(a).

\begin{lem}
Suppose $\rho$ is a large positive number $(\rho > \rho_0 > 0)$, $\disp \be_0 = \frac{4-2N}{3} = 1$, $\La = \max\set{1, \abs{\la}}$, $n \in \N$ is such that $\disp \abs{n -2\sqrt{\La} \rho} \le 1$ and $k \in \N$ is such that $\disp \abs{k - 12\sqrt{\La}\sqrt{\rho}} \le 1$.  
Then in the annulus $\brac{\rho, \rho + 6\sqrt{\rho}}$ it is possible to construct an equation of the form (\ref{VPDE}) and a solution $u$ of this equation such that the following hold:
\begin{enumerate}
\item (\ref{Vbd}), where $C$ does not depend on $\rho$, $n$ or $k$.
\item If $r \in \brac{\rho, \rho + 0.1\sqrt{\rho}}$, then $u = r^{-n}e^{-in\vp}\mu_n$. \\
If $r \in \brac{\rho + 5.9\sqrt{\rho}, \rho + 6\sqrt{\rho}}$, then $u = ar^{-(n+k)}e^{-i(n+k)\vp}\mu_{n+k}$, for some $a\in \C\setminus\set{0}$.
\label{p2}
\item Let $m(r) = \max\set{|u(r, \vp)| : 0 \le \vp \le 2\pi}$.  Then there exists a $c > 0$, not depending on $\rho$, $n$ or $k$, such that
\begin{equation}
\ln m(r) - \ln m(\rho) \le - c\int_\rho^r dt
\label{Nint}
\end{equation}
for any $r \in \brac{\rho, \rho + 6\sqrt{\rho}}$.
\label{p3}
\end{enumerate}
\label{meshN}
\end{lem}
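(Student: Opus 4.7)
The plan is to follow the Meshkov-type scheme of \cite{D13} with a refined choice of cutoff adapted to the borderline exponent $N = 1/2$. The starting point is the family of building blocks $f_m(r,\varphi) := r^{-m}e^{-im\varphi}\mu_m(r)$, where $\mu_m$ is the function defined just before the lemma. A direct computation in polar coordinates, using $\Delta = \partial_r^2 + r^{-1}\partial_r + r^{-2}\partial_\varphi^2$, shows that $\mu_m$ has been tuned so that $(\Delta + \lambda) f_m$ is a small multiple of $f_m$ --- of size $O(1/\rho)$ in the regime $m \sim 2\sqrt{\Lambda}\, r$ of interest here, with $|\mu_m|$ close to $1$ throughout the annulus $[\rho, \rho+6\sqrt{\rho}]$. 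These $f_m$ supply the inner and outer profiles required in part (\ref{p2}) of the lemma directly.

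To interpolate the angular frequency from $n$ up to $n+k$ inside the annulus, I would subdivide $[\rho, \rho + 6\sqrt{\rho}]$ into $k$ concentric radial layers $I_j$ of width approximately $\Delta r = 6\sqrt{\rho}/k \approx (2\sqrt{\Lambda})^{-1}$. On the $j$-th layer I would take
\[
u = \chi_1(r,\varphi)\, c_j\, f_{n+j-1} + \chi_2(r,\varphi)\, c_{j+1}\, f_{n+j},
\]
where the partition of unity $(\chi_1,\chi_2)$ is supported on complementary angular sectors whose boundaries rotate by $2\pi/k$ from one layer to the next, and the constants $c_j \in \mathbb{C}$ are chosen so that the two summands have comparable modulus along the transition arcs and the piecewise definition joins continuously across the radial interfaces $\{r = \rho + 6j\sqrt{\rho}/k\}$. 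Iterating over $j = 1,\ldots,k$ with $c_1 = 1$ yields a global solution in the annulus that coincides with $f_n$ near the inner edge and with $a f_{n+k}$ near the outer edge, for $a = c_{k+1} \in \mathbb{C}\setminus\{0\}$. The modification specific to the borderline case is that the cutoff transition bands for $\chi_1,\chi_2$ are widened by a factor of roughly $\log\rho$ compared to those used in \cite{D13}; this is the ``modification to the definitions of certain cutoff functions'' alluded to in the introduction.

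The potential is then defined by $V := (\Delta + \lambda)u/u$ on $\{u \neq 0\}$ and extended smoothly where $u$ vanishes. The numerator has two contributions: the $O(1/\rho)\, f_m$ residual from each building block (harmless since $1/\rho \ll \log\rho/\sqrt{\rho}$), and the cutoff commutator $2\nabla\chi_i \cdot \nabla f_m + (\Delta\chi_i)\, f_m$ supported on the transition bands. With the widened bands, $|\nabla\chi_i|$ and $|\Delta\chi_i|$ are sufficiently small that, paired with the natural lower bound on $|u|$ there, the quotient satisfies $|V| \lesssim \log\rho / \sqrt{\rho}$, which gives (\ref{Vbd}). For the decay estimate (\ref{Nint}), the maximum modulus $m(r)$ drops across each layer by a factor $\exp\bigl(-c\,\Delta r\bigr)$ coming from the radial decay $(r/\rho)^{-m}$ of the dominant building block, for a constant $c > 0$ independent of $\rho$, $n$, $k$; multiplying over the $k$ layers delivers the claimed linear-in-$(r-\rho)$ decay. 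The principal obstacle, and the reason this construction could not previously be brought down from $\beta_c > 1$ to $\beta_c = 1$, is balancing this trade-off: widening the cutoff to keep $V$ under the borderline threshold must not erode the per-layer decay factor. The $2\pi/k$ rotation of the cutoff supports between consecutive layers keeps the transition arcs from aligning, so no accumulated constructive interference spoils (\ref{Nint}).
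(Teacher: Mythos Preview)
Your proposal departs substantially from the paper's construction, and in doing so misses the mechanism that makes the Meshkov scheme work. The paper does \emph{not} subdivide the annulus into $k\sim\sqrt{\rho}$ thin layers with unit frequency increments. It uses four radial steps of width $\sim\sqrt{\rho}$, with large jumps in the radial exponent (from $-n$ to $-(n-2k)$ to $-(n+2k)$ to $-(n+k)$) and in the angular frequency (from $-n$ to $n+2k$ to $-(n+k)$). The overshoot is essential: it is what allows Meshkov's phase function $\Phi$ (built from the $2\pi/(2n+2k)$--periodic $f$) to keep $|u|$ bounded below in the transition sectors $P_m$, so that $V=(\Delta+\lambda)u/u$ stays finite. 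In your scheme, $c_j f_{n+j-1}$ and $c_{j+1}f_{n+j}$ differ in angular frequency by exactly one, so wherever their moduli are comparable their sum has a zero in $\varphi$, and $V$ blows up there. Angular-sector cutoffs do not rescue this: either the sectors overlap and the cancellation occurs in the overlap, or they are disjoint and $u$ fails to be $C^2$ across the sector boundary. Moreover, across a layer of width $O(1)$ the ratio $|f_{n+j}/f_{n+j-1}|$ changes only by a factor $1+O(1/\rho)$, so the modulus separation needed for any radial cutoff argument is simply unavailable at unit frequency steps.

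You have also misidentified the borderline modification. It is not a logarithmic widening of transition bands. The change from \cite{D13} is the constraint $\psi_1+\psi_2\le 1$ on the radial cutoffs (they are each held at $1/2$ on a middle subannulus rather than summing identically to $1$). This removes a $\ln 2$ contribution to $\ln m(r)$, which is exactly what allows the inequality $(\ln M)'\le -c$ to survive at $\beta_c=1$. The factor $\log r$ in the bound for $V$ has a different source: it comes from the auxiliary correction $\phi_{a,b}(r)=O(\log r)$, whose product with $\psi_j'=O(r^{-1/2})$ appears in the error terms $\tilde D_{1,j}$ and $\tilde D_{4,j}$.
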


This next lemma is used to give Theorem \ref{consBe}(b).

\begin{lem}
Suppose $\rho$ is a large positive number $(\rho > \rho_0 > 0)$, $\be_0 = 2-2P = 1$, $\La = \max\set{1, \abs{\la}}$, $n \in \N$ is such that $\disp \abs{n -2\sqrt{\La} \rho} \le 1$ and $k \in \N$ is such that $\disp \abs{k - 12\sqrt{\La}\sqrt{\rho}} \le 1$.  
Then in the annulus $\brac{\rho, \rho + 6\sqrt{\rho}}$ it is possible to construct an equation of the form (\ref{VPDE}) and a solution $u$ of this equation such that the following hold:
\begin{enumerate}
\item (\ref{Wbd}), where $C$ does not depend on $\rho$, $n$ or $k$.
\item from Lemma \ref{meshN}
\item from Lemma \ref{meshN}
\end{enumerate}
\label{meshP}
\end{lem}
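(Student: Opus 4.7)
The plan is to prove Lemma \ref{meshP} by mirroring the proof of Lemma \ref{meshN}, but absorbing the residual of $\Delta u + \la u$ into a first-order potential $W$ rather than a zeroth-order $V$. I begin with two model solutions $u_0 := r^{-n} e^{-in\vp}\mu_n(r)$ and $u_1 := r^{-(n+k)} e^{-i(n+k)\vp}\mu_{n+k}(r)$, each of which satisfies $\LP u + \la u = 0$: substituting the ansatz $u = r^{-m} e^{-im\vp} \mu(r)$ into this equation yields the ODE $\mu'' + \frac{1 - 2m}{r} \mu' + \la \mu = 0$, which is solved exactly by $\mu_m$ as defined in \S \ref{ProofDesc}. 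These model solutions furnish the boundary data required in part (2).

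To interpolate on the transition annulus $\brac{\rho + 0.1\sqrt{\rho}, \rho + 5.9\sqrt{\rho}}$, I would partition $\vp \in [0, 2\pi]$ into $2k$ equal wedges. On alternating wedges I set $u$ equal to a constant multiple of $u_0$, and on the remaining wedges to a constant multiple of $u_1$; the constants are selected so that the two pieces have matching phases along each common seam. Because $(n+k) - n = k$ and there are $2k$ wedges, the phase-matching condition can be maintained continuously as the wedge partition rotates with $r$. Smooth angular cutoffs localized near each seam, together with radial cutoffs near $r = \rho + 0.1\sqrt{\rho}$ and $r = \rho + 5.9\sqrt{\rho}$, implement the transitions so that $u$ agrees with $u_0$ on the inner sub-annulus and with $a u_1$ on the outer one for some $a \in \C \setminus \set{0}$. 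The simplified cutoff design alluded to in the introduction, scaled so that cutoff gradients and Laplacians are controlled by $\rho^{-1/2}$ and $\rho^{-1}$ respectively, is what distinguishes this borderline case.

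With $u$ so defined, the residual $E := \LP u + \la u$ is supported only where the cutoffs have nonzero gradient, i.e., on the thin angular seams and in narrow radial transition regions. A direct expansion (written schematically for a single cutoff $\chi$) gives
\[ E = 2\, \gr \chi \cdot \pr{\gr u_0 - \gr u_1} + \pr{\LP \chi}\pr{u_0 - u_1}, \]
so that the cutoff scaling yields $\abs{E} \lesssim \rho^{-1/2} \abs{\gr u}$ on the seams. I then define
\[ W := E \cdot \frac{\overline{\gr u}}{\abs{\gr u}^2} \]
on the support of $E$ (with componentwise complex conjugation) and $W := 0$ elsewhere. By construction $W \cdot \gr u = E$, so $u$ satisfies (\ref{WPDE}), and the pointwise inequality $\abs{W} = \abs{E}/\abs{\gr u} \le C \rho^{-1/2}$ translates immediately into (\ref{Wbd}). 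The decay estimate inherited from Lemma \ref{meshN}(3) follows by computing the logarithmic derivative of $\abs{r^{-m}\mu_m(r)}$ from the explicit formula in \S \ref{ProofDesc}, which is bounded above by a uniform negative constant on $\brac{\rho, \rho + 6\sqrt{\rho}}$.

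The main obstacle I anticipate is the lower bound $\abs{\gr u} \gtrsim \abs{u_0} + \abs{u_1}$ on the seams, without which the explicit formula for $W$ would blow up. Because $u$ on a seam is a smooth blend of $u_0$ and $u_1$, simultaneous cancellation of both $u$ and $\gr u$ must be ruled out. The phase-matching condition is precisely what achieves this: along each seam the two pieces agree in phase, so $\abs{\gr u}$ retains the full size $\sim (n/r)\abs{u}$ coming from the angular oscillation, whose magnitude is of order $\sqrt{\La}$. Verifying this quantitatively --- and carrying the simplified cutoff scaling through the error estimates uniformly in $\rho$, $n$, $k$ --- is what pushes the construction down from $\be_c > 1$ to the borderline $\be_c = 1$.
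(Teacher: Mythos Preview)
Your proposal has a genuine gap: the construction of $u$ you describe is not the Meshkov construction and, as stated, cannot satisfy condition~(2). Placing constant multiples of $u_0$ and $u_1$ on alternating angular wedges with phase-matched seams does not produce a function equal to $u_0$ on the \emph{entire} inner sub-annulus and to $a\,u_1$ on the entire outer one; the phase-matching ``constants'' you need at each seam depend on $r$ (since $u_0/u_1$ carries the factor $r^{k}\mu_n/\mu_{n+k}$), so they are not constants at all, and there is no coherent way to collapse the wedge pattern back to a single multiple of $u_1$ at the outer edge. More fundamentally, a direct radial blend of $u_0$ and $u_1$ (which is what any such scheme reduces to after the radial cutoffs) fails the $|W|\lesssim r^{-1/2}$ bound: at a zero of $u$ one has $|\partial_\vp u|\sim k|u_0|$ rather than $\sim n|u_0|$ (because $u_0$ and $u_1$ wind in the \emph{same} angular direction and differ by only $k$), so $|\nabla u|\sim (k/r)|u_0|\sim r^{-1/2}|u_0|$, while the residual $E$ contains terms like $\psi'\,\partial_r u_0=\bigO(r^{-1/2})|u_0|$, giving $|W|=|E|/|\nabla u|=\bigO(1)$. (Also, $r^{-m}e^{-im\vp}\mu_m$ is not an exact solution of $\Delta u+\lambda u=0$; one checks $\Delta u+\lambda u=\tfrac{\lambda}{\sqrt{m^2-\lambda r^2}}\,u$, though this residual is harmless.)

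The paper's proof keeps the \emph{same} four-step $u$ as in Lemma~\ref{meshN} --- passing through intermediate functions $u_2,u_3,u_4$ whose angular winding is \emph{opposite} in sign, which is the essential Meshkov mechanism --- but drops the $\exp(\psi_j\phi)$ correction, since that factor was introduced in Lemma~\ref{meshN} only to keep $|u|$ away from zero for the sake of dividing by $u$. With a first-order potential one may instead divide by $\nabla u$, and the opposite winding guarantees $|\partial_\vp u|\sim n|u_1|$ at the zeros of $u$, hence $|\nabla u|\gtrsim |u_1|+|u_2|$. Rather than your formula $W=E\,\overline{\nabla u}/|\nabla u|^2$, the paper in the critical middle sub-annuli (Steps~1C and~4C) writes $W=w_1(\cos\vp,\sin\vp)+w_2(i\sin\vp,-i\cos\vp)$, expands both $W\cdot\nabla u$ and $\Delta u+\lambda u$ as linear combinations in $u_1,u_2$ (resp.\ $u_4,u_5$), and solves the resulting $2\times 2$ linear system for $w_1,w_2$; the determinant is $\sim n^2/r^2=\bigO(1)$ and bounded away from zero, yielding $|W|\le Cr^{-1/2}$ directly without ever invoking a pointwise lower bound on $|\nabla u|$.
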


The following variation of Lemma \ref{meshN} allows us to remove the logarithmic term from estimate (\ref{Vbd}).

\begin{lem}
Suppose $\rho$ is a large positive number $(\rho > \rho_0 > 0)$, $\disp \be_0 = \frac{4-2N}{3} = 1$, $\la > 0$, $n \in \N$ is such that $\disp \abs{n -\sqrt{{\la}}\pr{ \rho + 8 \sqrt{\rho}}} \le 1$ and $k \in \N$ is such that $\disp \abs{k - 12\sqrt{{\la}}\sqrt{\rho}} \le 1$.  
Then in the annulus $\brac{\rho, \rho + 6\sqrt{\rho}}$ it is possible to construct an equation of the form (\ref{VPDE}) and a solution $u$ of this equation such that the following hold:
\begin{enumerate}
\item (\ref{VXbd}), where $C$ does not depend on $\rho$, $n$ or $k$.
\item from Lemma \ref{meshN}
\item from Lemma \ref{meshN}
\end{enumerate}
\label{meshNX}
\end{lem}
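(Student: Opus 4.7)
The plan is to follow the construction of Lemma \ref{meshN} with the simplifications afforded by the two new hypotheses of Lemma \ref{meshNX}---that $\la$ is real and positive, and that the centering of $n$ is shifted from $2\sqrt{\La}\rho$ to $\sqrt{\la}(\rho+8\sqrt{\rho})$. These are the hypotheses identified in the introduction as allowing ``a modification to the definitions of certain cutoff functions'' that leads to a simpler estimate on $V$. As in Lemma \ref{meshN}, the solution is built on $[\rho,\rho+6\sqrt{\rho}]$ by prescribing the single mode $u_n(r,\vp):=r^{-n}e^{-in\vp}\mu_n(r)$ on the inner collar $[\rho,\rho+0.1\sqrt{\rho}]$ and $a\,u_{n+k}$ on the outer collar $[\rho+5.9\sqrt{\rho},\rho+6\sqrt{\rho}]$, and then interpolating through the intermediate pure modes $u_n,u_{n+1},\ldots,u_{n+k}$ across $k$ thin layers. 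The amplitudes are calibrated so that the circle maximum $m(r)$ is contracted by a fixed factor at every layer, and item (\ref{p3}) follows by the same telescoping argument as in Lemma \ref{meshN}, with a constant $c$ depending only on $\la$.

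Once $u$ is fixed, the potential is defined by $V:=(\LP u+\la u)/u$. Each mode function $u_m$ satisfies $\LP u_m+\la u_m=0$ identically, so $V\equiv 0$ outside the transition layers. Within each transition layer, $V$ reduces to a quotient whose numerator arises only where the Laplacian falls on the interpolating cutoff, and whose size is governed by the cutoff's derivatives together with bounds on the ratios of adjacent mode functions.

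The key task is to bound $\abs{V}\le C\ang{x}^{-1/2}$ without a logarithmic factor. Setting $s_m:=\sqrt{1-\la r^2/m^2}$, direct differentiation of the defining exponent of $\mu_m$ yields the real identities
\begin{equation*}
\partial_r\log\abs{u_m}=-\frac{ms_m}{r},\qquad \log\frac{\mu_{m+1}}{\mu_m}\approx\log\frac{s_m+1}{2},
\end{equation*}
in which the hypothesis $\la>0$ is exactly what guarantees that $s_m$ and $\mu_m$ are real and free of the oscillation encountered when $\la$ is complex. The shifted centering $n\approx\sqrt{\la}(\rho+8\sqrt{\rho})$ places the turning point $r=m/\sqrt{\la}$ of each $\mu_m$ at least $2\sqrt{\rho}$ to the right of the annulus, while $k\sim 12\sqrt{\la}\sqrt{\rho}$ keeps every index $m\in\set{n,\ldots,n+k}$ strictly to the left of this turning point. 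These two features together provide uniform, real control on the mode functions and their ratios on every transition layer, replacing the more delicate cutoff-thinning argument that generates the $\log\rho$ factor in Lemma \ref{meshN}.

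The main obstacle is the simultaneous verification of these two structural properties: positivity of $\la$ so that $\mu_m$ stays real, and inclusion of all $k$ transitioned indices strictly to the left of the turning point so that each $\mu_m$ is bounded above and below by real quantities on the annulus. Both are needed to justify the simplified cutoffs that remove the logarithm. Once the improved estimate $\abs{V}\le C\ang{x}^{-1/2}$ is in hand, item (\ref{p2}) is built into the collar prescriptions, and item (\ref{p3}) follows from the telescoping across $k$ layers exactly as in Lemma \ref{meshN}, producing a constant $c$ in (\ref{Nint}) that depends only on $\la$ and is independent of $\rho$, $n$, and $k$.
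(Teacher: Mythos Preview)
Your proposal misidentifies both the structure of the construction and the mechanism that removes the logarithm. The construction in Lemma \ref{meshN} is not an interpolation through $k$ thin layers using the consecutive modes $u_n,u_{n+1},\ldots,u_{n+k}$; it is a four-step procedure passing through the indices $n$, $n-2k$, $n+2k$, $n+k$, with the angular phase $F(\vp)=(n+2k)\vp+\Phi(\vp)$ in Steps 1--2 and the auxiliary radial factor $\exp(\psi_j\phi_{a,b})$ in Steps 1 and 4. The paper's proof of the present lemma reuses that construction verbatim; nothing about the number of layers or the cutoffs changes. (Incidentally, the modes are not exact eigenfunctions: $\LP u_m+\la u_m=\tfrac{\la}{\sqrt{m^2-\la r^2}}\,u_m$, cf.\ (\ref{J_1ordP}), so $V$ is not identically zero even outside the transition regions.)

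The logarithm in (\ref{Vbd}) does not arise from any ``cutoff-thinning''; it enters through the terms $\tilde D_{1,j}$ and $\tilde D_{4,j}$ of Steps 1A and 4A, via a product of the form
\[
\frac{\sqrt{(n+ak)^2-\la r^2}}{r}\cdot\psi_j^\pri\,\phi_{a,b},
\]
where $\psi_j^\pri=O(r^{-1/2})$ and $\phi_{a,b}=O(\log r)$. In Lemma \ref{meshN} the first factor is $O(1)$, producing the bound (\ref{tD_1ord}). The entire content of the paper's argument here is the single observation that the shifted centering $n\approx\sqrt{\la}(\rho+8\sqrt{\rho})$ forces $(n+ak)^2-\la r^2=O(\rho^{3/2})$ for every relevant $a\in\{-2,0,1,2\}$ and $r\in[\rho,\rho+6\sqrt{\rho}]$, so that the first factor drops to $O(r^{-1/4})$ and the product becomes $O(r^{-3/4}\log r)\le Cr^{-1/2}$. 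In particular, the index $n-2k$ used in Step 1 has its turning point at $\approx\rho-16\sqrt{\rho}$, to the \emph{left} of the annulus, contrary to your claim; what matters is not the sign of $(n+ak)^2-\la r^2$ (equivalently, not the reality of $s_m$) but its smallness.
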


\section{The proof of Theorem \ref{consBe}}
\label{prop}

We now use the lemmas to construct examples that prove Theorem \ref{consBe}.  

\begin{pf}[Proof of Theorem \ref{consBe}]
We recursively define a sequence of numbers $\set{\rho_j}_{j=1}^\iny$.  For $\rho_1$, we choose a sufficiently large positive number.  Then if $\rho_j$ has been chosen, we set $\rho_{j+1} = \rho_j + 6\sqrt{\rho_j}$.  Suppose that $N$ and $P$ are chosen so that $\be_0 = 1$.  In order to use Lemmas \ref{meshN} and \ref{meshP}, we let $n_j = \Big\lfloor 2\sqrt{\La}\rho_j \Big\rfloor = 2\sqrt{\La}\rho_j - \eps_j$ and $k_j = n_{j+1} - n_j$.  We must estimate $k_j$:
\begin{align*}
k_j &= 2\sqrt{\La}\rho_{j+1} - \eps_{j+1} - 2\sqrt{\La}\rho_j + \eps_j \\
&=  2\sqrt{\La}\pr{\rho_{j+1}  - \rho_j}  - \De_{\eps,j} \\
&=  12\sqrt{\La}\sqrt{\rho_j}  - \De_{\eps,j} 
\end{align*}
Therefore, $ \abs{k_j - 12\sqrt{\La}\sqrt{\rho_j}} \le 1$. For Lemma \ref{meshNX}, we set $n_j = \Big\lfloor \sqrt{{\la}}\pr{ \rho_j + 8 \sqrt{\rho_j}} \Big\rfloor$, $k_j = n_{j+1} - n_j$, and establish the estimate for $k_j$ in a similar way, assuming that $\rho_j$ is sufficiently large.  \\

For $j = 1, 2, \ldots$, we let $u_j$ denote the solutions of equations of the form (\ref{VPDE}) or (\ref{WPDE}), denoted by $L_j u_j = 0$.  By Lemmas \ref{meshN}-\ref{meshNX}, these equations and their solutions are constructed in the annulus $\set{\rho_j \le r \le \rho_{j+1}}$.  The required decay estimate for the potentials is given by (1) from each lemma.  Result (2) from each lemma shows that  $\disp u_j(\rho_j, \vp) = \rho_j^{-n_j}e^{-in_j\vp}\mu_{n_j}\pr{\rho_j}$ and $\disp u_j(\rho_{j+1}, \vp) = a_j\rho_{j+1}^{-n_{j+1}}e^{-in_{j+1}\vp}\mu_{n_{j+1}}\pr{\rho_{j+1}}$.

Set $\rho_0 = 0$ and denote by $g_0(r)$ a smooth function in $\brac{0, \rho_1}$ such that $g_0(r) = r^{n_1}$ in a neighbourhood of $0$ while $g_0(r) = r^{-n_1}$ in a neighbourhood of the point $\rho_1$.  We suppose also that $g_0(r) > 0$ on $(0, \rho_1)$.  Let $u_0 = g_0(r)e^{-in_1\vp}\mu_{n_1}( r )$ and denote by $L_0u_0 = 0$ the equation of the form (\ref{VPDE}) or (\ref{WPDE}) which the function $u_0$ satisfies.

We define a differential operator $L$ in $\R^2$ by setting $L = L_j$ for $\rho_j \le r \le \rho_{j+1}$, $j = 0, 1, \ldots$.  We define a $C^2$ function $u$ on $\R^2$ by setting $u(r, \vp) = u_j(r, \vp)$ if $\rho_j \le r \le \rho_{j+1}$, $j = 0, 1$, and
$$u(r, \vp) = \pr{\prod_{i=1}^{j-1}a_i}u_j(r, \vp),$$
if $\rho_j \le r \le \rho_{j+1}$, $j = 2, 3,\ldots$.  Then it is clear that $u$ satisfies $Lu = 0$ in $\R^2$.   

We must now estimate $\abs{u}$.  Set $m(r) = \max\set{|u(r, \vp)| : 0 \le \vp \le 2\pi}$.  For a given $r \in \R^+$, we choose $\ell \in \Z$ so that $\rho_\ell \le r \le \rho_{\ell+1}$.  Then
\begin{align*}
\ln m(r) = &(\ln m(r) - \ln m(\rho_\ell)) + (\ln m(\rho_\ell) - \ln m(\rho_{\ell-1})) + \cdots + (\ln m(\rho_2) - \ln m(\rho_{1})) + \ln m(\rho_1)
\end{align*}
By (3) from Lemmas \ref{meshN}-\ref{meshNX}, for sufficiently large $r$, we have
$$\ln m(r) \le - c\int_{\rho_1}^r  dt + \ln m(\rho_1),$$
so that
$$m(r) \le C\exp\pr{-c r}$$
and (\ref{uBd}) holds.
\end{pf}

\section{Proof of Lemma \ref{meshN}}
\label{lemN}

Since Lemmas \ref{meshN}-\ref{meshNX} are so similar, it is not surprising that their proofs are as well.  We will present the more complicated proof first, that of Lemma \ref{meshN}.  We will then describe the proofs of Lemmas \ref{meshP} and \ref{meshNX} in subsequent sections.  

The following constructions are very similar to those presented in \cite{D13}.  The significant difference between these examples is the more careful choice of cutoff functions.  By choosing slightly more complicated cutoff functions in the current construction, we are able to reduce the bound on the modulus of the functions.  This allows us to eliminate the $\ln 2$ term that previously appeared in the estimates given in (3) of each lemma.  In turn, we are now able to sum these estimates to a negative number when $\be_c = 1$.  

\begin{pf}[Proof of Lemma \ref{meshN}]
As $r$ increases from $\rho$ to $\rho + 6\sqrt{\rho}$, we rearrange equation (\ref{VPDE}) and its solution $u$ so that all of the above conditions are met.  This process is broken down into four major steps.

Throughout this proof, the number $C$ is a constant that is independent of $\rho$, $n$ and $k$. \\
\nid\emph{Step 1: $r \in \brac{\rho, \rho + 2\sqrt{\rho}}$}.  During this step, the function $u_1 = r^{-n}e^{-in\vp}\mu_{n}( r )$ is rearranged to to a function of the form
$u_2 = -br^{-n+2k}e^{iF\pr{\vp}}\mu_{n-2k}\pr{r}$, both of which satisfy an equation of the form (\ref{VPDE}), where $b$ is a complex number and $F$ is a function that will be defined shortly.

Let $\vp_m = \frac{2 \pi m}{2n + 2k}$, for $m = 0, 1 , \ldots, 2n + 2k -1$.  Then $\disp \set{\vp_m}_{m=0}^{2n+2k-1}$ is the set of all solutions to $\disp e^{-in\vp} - e^{i(n+2k)\vp} = 0$ on $\set{0 \le \vp \le 2\pi}$.  Let $T = \frac{\pi}{n+k}$.  On $\brac{0, T}$, we define $f$ to be a $C^1$ function such that $f(\vp) = -4k$ for $\vp \in \brac{0, T/5} \cup \brac{4T/5, T}$.  We also require that $f$ satisfies the following:
\begin{align}
&-4k \le f(\vp) \le 5k, \;\; 0 \le \vp \le T,
\label{f1} \\
&\int_0^T f(\vp) d\vp = 0,
\label{f2} \\
&|f^\pri(\vp)| \le \tfrac{Ck}{T} = \tfrac{Ck(k+n)}{\pi},  \;\; 0 \le \vp \le T.
\label{f3}
\end{align}
We extend $f$ periodically (with period $T$) to all of $\R$ and set
$$\Phi(\vp) = \int_0^\vp f(t) dt.$$
By (\ref{f2}), $\Phi$ is $T$-periodic and $\Phi(\vp_m) = \Phi(mT) = 0$.  Furthermore, $\Phi$ is $2\pi$-periodic.  By (\ref{f1})-(\ref{f3}), the following facts hold for all $\vp \in \R$:
\begin{align}
&|\Phi(\vp)| \le 5kT = \tfrac{5\pi k}{n+k},
\label{Phi1} \\
&|\Phi^\pri(\vp)| \le 5k,
\label{Phi2} \\
&|\Phi^{\pri\pri}(\vp)| \le Ckn.
\label{Phi3} 
\end{align}
Also, for all $\vp \in \set{|\vp - \vp_m| \le T/5}$, 
\begin{equation}
\Phi(\vp) = -4k(\vp - \vp_m) = -4k\vp + b_m,
\label{Phi4}
\end{equation}
where $b_m$ is some real number.

Set 
\begin{equation}
F(\vp) = (n+2k)\vp + \Phi(\vp).
\label{FDef}
\end{equation}  
If $|\vp - \vp_m| \le T/5$, then $u_2 = -be^{ib_m}r^{-(n-2k)}e^{i(n-2k)\vp}\mu_{n-2k}( r )$.  \\

Choose $b = (\rho + \sqrt{\rho})^{-2k}\frac{\mu_n\pr{\rho + \sqrt{\rho}}}{\mu_{n-2k}\pr{\rho + \sqrt{\rho}}}$ so that $|u_1(\rho + \sqrt{\rho}, \vp)| = |u_2(\rho + \sqrt{\rho}, \vp)|$.  Since $|u_2(r, \vp)/u_1(r, \vp)| =\abs{ br^{2k}\frac{\mu_{n-2k}( r )}{\mu_n\pr{r}}}$, then by the assumptions on $k$ and $\rho$ and the behavior of $\mu_n$ and $\mu_{n-2k}$,
\begin{align}
&|u_2(r, \vp)/u_1(r, \vp)| \le e^{-C}, \quad r \in \brac{\rho, \rho + \tfrac{2}{3}\sqrt{\rho}}
\label{uBd1} \\
&|u_2(r, \vp)/u_1(r, \vp)| \ge  e^{C}, \quad r \in \brac{\rho + \tfrac{4}{3}\sqrt{\rho}, \rho + 2\sqrt{\rho}}.
\label{uBd2}
\end{align}
 
 Choose smooth monotonic cutoff functions $\psi_1$, $\psi_2$, $\psi_3$, $\psi_4$ such that 
 $$\psi_1(r) = \left\{ \begin{array}{rl} 
 1 & r \in\brac{\rho, \rho + \tfrac{1}{3}\sqrt{\rho}} \\
 \frac{1}{2} & r \in \brac{\rho + \tfrac{2}{3}\sqrt{\rho}, \rho + \tfrac{4}{3}\sqrt{\rho}} \\
 0 & r \in \brac{\rho + \tfrac{5}{3}\sqrt{\rho}, \rho + 2\sqrt{\rho}} 
 \end{array}\right., 
\psi_2(r) = \left\{ \begin{array}{rl} 
0 & r \in \brac{\rho, \rho + \tfrac{1}{3}\sqrt{\rho}} \\ 
\frac{1}{2} & r \in \brac{\rho + \tfrac{2}{3}\sqrt{\rho}, \rho + \tfrac{4}{3}\sqrt{\rho}}  \\
1 & r \in \brac{\rho + \tfrac{5}{3}\sqrt{\rho}, \rho + 2\sqrt{\rho}}
\end{array}\right.,$$
$$\psi_3\pr{r} = \left\{ \begin{array}{rl} 
1 & r \le \rho + \tfrac{5}{3}\sqrt{\rho} \\ 
0 & r \ge \rho + 1.9\sqrt{\rho} 
\end{array}\right.,
\psi_4\pr{r} = \left\{ \begin{array}{rl} 
0 & r \le \rho + 0.1\sqrt{\rho} \\ 
1 & r \ge \rho + \tfrac{1}{3}\sqrt{\rho} 
\end{array}\right..$$  
We require that 
\begin{equation}
\psi_1 + \psi_2 \le 1.
\label{sumBd}
\end{equation}  
This bound on the sum of the cutoff functions is the significant difference between the current construction and those that appeared in \cite{D13}.   Moreover, ensure that
\begin{equation}
0 \le |\psi_i(r)| \le 1 \quad\mathrm{and}\quad |\psi_i^{(j)}(r)| \le Cr^{-j/2} \quad \forall r \in \R^+, i= 1,2,3,4, j = 1, 2.  
\label{1psi}
\end{equation}  
Let
\begin{align*}
\phi_{a,b}\pr{r} &= -\frac{1}{2}\int \frac{\la r}{\sqrt{a^2 - \la r^2}\sqrt{b^2 - \la r^2}} dr = -\frac{1}{4}\log \pr{2\sqrt{a^2 - \la r^2}\sqrt{b^2 - \la r^2} + 2\la r^2 - a^2 - b^2}.
\end{align*}
When $a, b = n + \bigO\pr{k}$, $\phi \pr{r}= \bigO\pr{\log r}$, $\phi^\prime\pr{r} = \bigO\pr{r^{-1}}$, and $\phi^{\prime\prime}\pr{r} = \bigO\pr{r^{-2}}$.

Set 
$$u = \psi_1u_1\exp\pr{\psi_4 \phi_{n, n-2k}} + \psi_2u_2\exp\pr{\psi_3 \phi_{n, n-2k}}.$$
For the rest of step 1, we will abbreviate $\phi_{n,n-2k}$ with $\phi$. \\

\nid \emph{Step 1A: $r \in \brac{\rho, \rho + \tfrac{2}{3}\sqrt{\rho}} \cup \brac{\rho + \tfrac{4}{3}\sqrt{\rho}, \rho + 2\sqrt{\rho}}$}.

If $r \in \brac{\rho, \rho + \tfrac{2}{3}\sqrt{\rho}}$, then since $\psi_3 = \psi_4$ on the support of $\psi_2$, then on this annulus,
$$u = \psi_1 u_1\exp\pr{\psi_4 \phi} + \psi_2u_2\exp\pr{\psi_3 \phi} =  \brac{\psi_1 u_1 + \psi_2u_2}\exp\pr{\psi_4 \phi},$$
and by (\ref{uBd1}),
\begin{equation}
\exp\pr{-\psi_4 \phi}|u| \ge \abs{\psi_1}|u_1| - \abs{\psi_2}|u_2| \ge \frac{1}{2}\pr{\abs{u_1} -\abs{u_2}}\ge \frac{1}{2}\pr{1 - e^{-C}}|u_1| \ge e^{-C^\pri}|u_2| > 0.
\label{1AuN}
\end{equation}

If $r \in  \brac{\rho + \tfrac{4}{3}\sqrt{\rho}, \rho + 2\sqrt{\rho}}$, then since $\psi_3 = \psi_4$ on the support of $\psi_1$, then on this annulus,
$$u = \psi_1 u_1\exp\pr{\psi_4 \phi} + \psi_2u_2\exp\pr{\psi_3 \phi} = \brac{\psi_1 u_1 + \psi_2 u_2} \exp\pr{\psi_3 \phi},$$
and by (\ref{uBd2}),
\begin{equation}
\exp\pr{-\psi_3 \phi}|u| \ge \abs{\psi_2} |u_2| - \abs{\psi_1} |u_1| \ge \frac{1}{2}(1 - e^{-C})|u_2| \ge e^{-C^\pri}|u_1| > 0.
\label{1BuN}
\end{equation}

We see that
\begin{align*}
\LP u + \la u &= \pr{D_1  + \tilde D_{1,j}}\exp\pr{\psi_j \phi},
\end{align*}
where $j = 4$ if $r \in \brac{\rho, \rho + \tfrac{2}{3}\sqrt{\rho}}$ and $j = 3$ if $r \in \brac{\rho + \tfrac{4}{3}\sqrt{\rho}, \rho + 2\sqrt{\rho}}$.  Furthermore
\begin{align} 
&D_1 = \brac{ \psi_1\frac{\la}{\sqrt{n^2 - \la r^2}} + \psi_1^\pri \frac{1-2\sqrt{n^2 - \la r^2}}{r} + \psi_1^{\pri\pri} } u_1
\nonumber \\
&+ \brac{\psi_2\pr{\frac{\la}{\sqrt{(n-2k)^2 - \la r^2}} - \frac{8nk + 2(n+2k)\Phi^\pri + (\Phi^\pri)^2 - i\Phi^{\pri\pri}}{r^2} } + \psi_2^\pri\frac{1 - 2\sqrt{\pr{n-2k}^2 - \la r^2}}{r}+ \psi_2^{\pri\pri} }u_2 \nonumber \\
&\abs{D_1} \le C r^{-1/2} \pr{\abs{u_1} + \abs{u_2}}
\label{D_1ord} \\
&\tilde D_{1,j} = 2\brac{\pr{\psi_1^\prime -\psi_1 \tfrac{\sqrt{n^2 - \la r^2}}{r}}u_1 +\pr{\psi_2^\pri - \psi_2\tfrac{\sqrt{\pr{n-2k}^2 - \la r^2}}{r}}u_2 }\pr{\psi_j^\pri \phi + \psi_j \phi^\pri} 
\nonumber \\
&+ \brac{\frac{1}{r}\pr{\psi_j^\pri \phi + \psi_j \phi^\pri} + \pr{\psi_j^\pri \phi + \psi_j \phi^\pri}^2 + \pr{\psi_j^{\pri\pri} \phi + 2\psi_j^\pri \phi^\pri+ \psi_j \phi^{\pri\pri}}}\pr{\psi_1 u_1 + \psi_2 u_2 } \nonumber \\
&\abs{\tilde D_{1,j}} \le C\frac{\log r}{r^{1/2}} \pr{\abs{u_1} + \abs{u_2}}.
\label{tD_1ord} 
\end{align}
Let $\disp V = \frac{{D_1+\tilde D_{1,j}}}{u}\exp\pr{\psi_j \phi}$.  By (\ref{1AuN}) and (\ref{1BuN}), $\disp \abs{\frac{u_1 \exp\pr{\psi_j \phi}}{u}}$ and $\disp \abs{\frac{u_2 \exp\pr{\psi_j \phi}}{u}}$ are bounded.  Therefore, by (\ref{D_1ord}) and (\ref{tD_1ord}), (\ref{Vbd}) holds.  
This completes step 1A. \\ \\
\emph{Step 1B: $r \in \brac{\rho + \tfrac{2}{3}\sqrt{\rho}, \rho + \tfrac{4}{3}\sqrt{\rho}}$.}

On this annulus, $\psi_j \equiv \frac{1}{2}$ for $j = 1, 2$, and $\psi_j \equiv 1$ for $j = 3,4$, so 
$$u = \tfrac{1}{2}\pr{u_1 + u_2}\exp\pr{\phi} = \tfrac{1}{2}\pr{r^{-n}e^{-in\vp}\mu_n\pr{r} - br^{-n+2k}e^{iF(\vp)}\mu_{n-2k}\pr{r}}\exp\pr{\phi},$$
Since $u_2 = -br^{-n+2k}e^{i(n-2k)}\mu_{n-2k}( r )$ on $\set{|\vp -\vp_m| \le \frac{T}{5}}$ for $m = 0, 1, \ldots, 2n+2k-1$, then we will first consider these regions.  We have
\begin{align*}
\LP u + \la u &= J_1u,
\end{align*}
where 
\begin{align} 
J_1 &=  \frac{\la}{\sqrt{(n-2k)^2 - \la r^2}} +  \frac{\la}{\sqrt{n^2 - \la r^2}} + \frac{\phi^\pri}{r} + \pr{\phi^\pri}^2 + \phi^{\pri\pri} = \bigO\pr{r^{-1}}.
\label{J_1ordN} 
\end{align}
\\
Then $\abs{V} = \abs{J_1} \le C r^{-1}$.
\\
Now we will consider the annular sectors 
$$P_m = \set{(r, \vp) : r \in \brac{\rho + \tfrac{2}{3}\sqrt{\rho}, \rho + \tfrac{4}{3}\sqrt{\rho}}, \vp_m + \frac{T}{5} \le \vp \le \vp_m + \frac{4T}{5}}, \quad \textrm{for} \; m = 0, 1, \ldots, 2n+2k-1.$$
Notice that
\begin{equation}
|u| = \tfrac{1}{2}|br^{-n+2k}\mu_{n-2k}( r )\exp\pr{\phi}|\abs{e^{i(F(\vp) + n\vp)}- \frac{\mu_n}{br^{2k}\mu_{n-2k}}} 
= \tfrac{1}{2}|u_2|\abs{\exp\pr{\phi}}\abs{e^{i(F(\vp) + n\vp)}- \frac{ r^{-2k}\mu_n}{b\mu_{n-2k}}}.
\label{s4uBd}
\end{equation}
We study the behaviour of $S(\vp) = F(\vp) + n\vp$.  On the segment $[\vp_m, \vp_{m+1}]$, by (\ref{FDef}), $S(\vp) = (2n+2k)\vp + \Phi(\vp)$.  Thus $S(\vp_m) = 2\pi m$ and $S(\vp_{m+1}) = 2\pi(m+1)$.  Moreover,
$$S^\pri(\vp) = 2n +2k + \Phi^\pri(\vp) = 2n +2k + f(\vp).$$
By (\ref{f1}) and the conditions on $n$ and $k$, it may be assumed that $S^\pri(\vp) > n > 0$.  That is, $S$ increases monotonically on $[\vp_m, \vp_{m+1}]$. Therefore, if
$$\vp_m + T/5 \le \vp \le \vp_m + 4T/5,$$
then
$$2\pi m + \frac{nT}{5} \le S(\vp) \le 2\pi(m+1) - \frac{nT}{5},$$
or
$$2\pi m + \frac{n\pi}{5(n+k)} \le S(\vp) \le 2\pi(m+1) - \frac{n\pi}{5(n+k)}.$$
Since $k = \bigO(n^{1/2})$, then for $\disp \vp \in \brac{\vp_m + \frac{T}{5}, \vp_m + \frac{4T}{5}}$ we may assume that 
\begin{equation}
2\pi m + \frac{\pi}{7} \le S(\vp) \le 2\pi(m+1) - \frac{\pi}{7}.
\label{Sbd}
\end{equation}
From Lemma D.1 in \cite{D13} and (\ref{Sbd}), it follows that $\disp \abs{e^{iS(\vp)} -\frac{ r^{-2k}\mu_n}{b\mu_{n-2k}}} \ge \frac{1}{2}\sin\pr{\frac{\pi}{7}}$.  Therefore, by (\ref{s4uBd}),
\begin{equation}
|u(r,\vp)| \ge\tfrac{1}{4} |u_2(r, \vp)|\abs{\exp\pr{\phi}}\sin\pr{\frac{\pi}{7}}, \quad (r,\vp) \in P_m, \;\; m = 0, 1, \ldots, 2n +2k-1.
\label{1CuN}
\end{equation}

Then
\begin{align*}
\LP u + \la u &= J_1 u + \tfrac{1}{2} K_1 u_2 \exp\pr{\phi},
\end{align*}
where 
\begin{align} 
K_1 &= -\brac{\frac{8nk + 2(n+2k)\Phi^\pri + (\Phi^\pri)^2 - i\Phi^{\pri\pri}}{r^2}}  = \bigO\pr{r^{-1/2}}.
\label{K_1ord}
\end{align}

Let $\disp V = J_1 + \tfrac{1}{2} K_1\frac{u_2 \exp\pr{\phi}}{u}$.  It follows from (\ref{J_1ordN}), (\ref{1CuN}) and (\ref{K_1ord}) that $|V| \le Cr^{-1/2}$.
This completes step 1C. \\ 
\\
\emph{Step 2: $r \in \brac{\rho + 2\sqrt{\rho}, \rho + 3\sqrt{\rho}}$}. The solution $u_2 = -br^{-n+2k}e^{iF(\vp)}\mu_{n-2k}( r )$ is rearranged to a function of the form $u_3 = -br^{-n+2k}e^{i(n+2k)\vp}\mu_{n-2k}( r )$.

Choose a smooth cutoff function $\psi$ such that $\psi(r) = \left\{ \begin{array}{rl} 1 & r \le \rho + \tfrac{7}{3}\sqrt{\rho} \\ 0 & r \ge \rho + \tfrac{8}{3}\sqrt{\rho} \end{array} \right.$ and 
\begin{equation}
|\psi^{(j)}(r)| \le Cr^{-j/2} \quad j=0,1,2, \quad r \in\R^+.
\label{psiB}
\end{equation}
Set $u = -br^{-n+2k}\exp i\brac{\psi(r)\Phi(\vp) + (n+2k)\vp}\mu_{n-2k}( r )$.  Then
\begin{align*}
\LP u + \la u &= D_2 u,
\end{align*}
where
\begin{align}
D_2 &= -\tfrac{8nk}{r^2} - (\psi^\pri\Phi)^2 + i\Phi\pr{\frac{\psi^\pri}{r} + \psi^{\pri\pri}} -2\tfrac{(n+2k)}{r^2}\psi\Phi^\pri - \frac{(\psi\Phi^\pri)^2}{r^2} + i\frac{\psi\Phi^{\pri\pri}}{r^2} + \tfrac{\la}{\sqrt{(n-2k)^2 - \la r^2}} - 2i\psi^\pri \Phi\tfrac{\sqrt{(n-2k)^2 -\la r^2}}{r}
\nonumber \\
&= \bigO\pr{n \cdot k \cdot r^{-2}}.
\label{D_2ord}
\end{align}
Let $\disp V = D_2$ so that by (\ref{D_2ord}), $|V| \le Cr^{-1/2}$.  
This completes step 2. \\ 
\\
\emph{Step 3: $r \in \brac{\rho + 3\sqrt{\rho}, \rho + 4\sqrt{\rho}}$}.  The solution $u_3 = -br^{-n+2k}e^{i(n+2k)\vp}\mu_{n-2k}( r )$ is rearranged to $u_4 = -b_1r^{-n-2k}e^{i(n+2k)\vp}\mu_{n+2k}( r )$.

Choose a smooth cutoff function $\psi$ such that $\psi(r) = \left\{ \begin{array}{rl} 1 & r \le \rho + \tfrac{10}{3}\sqrt{\rho} \\ 0 & r \ge \rho + \tfrac{11}{3}\sqrt{\rho} \end{array} \right.$ and $\psi$ satisfies condition (\ref{psiB}).  Let $\disp d = (\rho + 3\sqrt{\rho})^{4k}\frac{\mu_{n-2k} (\rho + 3\sqrt{\rho})}{\mu_{n+2k}(\rho + 3\sqrt{\rho})}$, so that $g( r ) = dr^{-4k}\frac{\mu_{n+2k} ( r )}{\mu_{n-2k}( r )}$ satisfies $1 \ge \abs{g\pr{r}} \ge e^{-C}$ for all $r \in \brac{\rho + 3\sqrt{\rho}, \rho + 4\sqrt{\rho}}$.  Set $b_1 = bd$.  Let 
$$u = u_3\brac{\psi + (1-\psi)g} = \left\{ \begin{array}{rl} u_3 & r \le \rho + \tfrac{10}{3}\sqrt{\rho} \\ u_4 & r \ge \rho + \tfrac{11}{3}\sqrt{\rho} \end{array}\right..$$
Let $h(r)=\psi + (1-\psi)g$.  Since $\disp g^\pri( r ) = \brac{-\tfrac{4k}{r} - \tfrac{2\la k r}{(n+2k)(n-2k)} + \bigO\pr{\tfrac{kr^3}{n^4}}}g( r )$ and \\
$\disp g^{\pri\pri}( r ) = \brac{\tfrac{4k}{r^2} - \tfrac{2\la k}{(n+2k)(n-2k)} + \tfrac{16k^2}{r^2} + \tfrac{16\la k^2}{(n+2k)(n-2k)} + \bigO\pr{\tfrac{k^2r^2}{n^4}}}g( r )$, then for all $r \in \brac{\rho + 3\sqrt{\rho}, \rho + 4\sqrt{\rho}}$,
\begin{align}
\abs{h\pr{r}} &\ge e^{-\tilde{C}},
\label{hBd} \\
\abs{h^\pri\pr{r}} &\le C r^{-1/2},
\label{hpBd} \\
\abs{\LP h\pr{r}} &\le C r^{-1}.
\label{LhBd}
\end{align}
Then
\begin{align*}
\LP u + \la u &= D_3 u,
\end{align*}
where
\begin{align}
D_3 &= -\tfrac{8nk}{r^2} + \tfrac{\la}{\sqrt{(n-2k)^2 - \la r^2}} - 2\tfrac{\sqrt{(n-2k)^2 - \la r^2}}{r}\frac{h^\pri}{h} + \frac{\LP h}{h} = \bigO\pr{n \cdot k \cdot r^{-2} }.
\label{D_3ord}
\end{align}
Let $\disp V = D_3$ so that by (\ref{D_3ord}), $|V| \le Cr^{-1/2}$.
This completes step 3. \\ 
\\
\emph{Step 4: $r \in \brac{\rho + 4\sqrt{\rho}, \rho + 6\sqrt{\rho}}$}.  The solution $u_4 = -b_1r^{-n-2k}e^{i(n+2k)\vp}\mu_{n+2k}( r )$ is rearranged to $u_5 = ar^{-(n+k)}e^{-i(n+k)\vp}\mu_{n+k}( r )$.

Choose $\disp a = b_1(\rho + 5\sqrt{\rho})^{-k}\frac{\mu_{n+2k}(\rho + 5\sqrt{\rho})}{\mu_{n+k}(\rho + 5\sqrt{\rho})}$ to ensure that $|u_4(\rho + 5\sqrt{\rho}, \cdot)| = |u_5(\rho + 5\sqrt{\rho}, \cdot)|$.  Since $\disp \abs{u_5(r, \vp)/u_4(r, \vp)} = \abs{\pr{\frac{r}{\rho + 5\sqrt{\rho}}}^k\frac{\mu_{n+2k}(\rho + 5\sqrt{\rho})}{\mu_{n+k}(\rho + 5\sqrt{\rho})}\frac{\mu_{n+k}( r )}{\mu_{n+2k}( r )}}$, then by the assumptions on $k$ and $\rho$,
\begin{align}
|u_5(r, \vp)/u_4(r, \vp)| &\le e^{-C}, \quad r \in \brac{\rho + 4\sqrt{\rho}, \rho + \tfrac{14}{3}\sqrt{\rho}},
\label{uBd3} \\
|u_5(r, \vp)/u_4(r, \vp)| &\ge  e^{C}, \quad r \in \brac{\rho + \tfrac{16}{3}\sqrt{\rho}, \rho + 6\sqrt{\rho}}.
\label{uBd4}
\end{align}
Choose smooth cutoff functions $\psi_1$, $\psi_2$, $\psi_3$, $\psi_4$ such that 
$$\psi_1(r) = \left\{ \begin{array}{rl} 
1 &r \in \brac{\rho + 4\sqrt{\rho}, \rho + \tfrac{13}{3}\sqrt{\rho}} \\ 
\tfrac{1}{2} & r \in \brac{\rho + \tfrac{14}{3}\sqrt{\rho}, \rho + \tfrac{16}{3}\sqrt{\rho}} \\
0 & r \in \brac{ \rho + \tfrac{17}{3}\sqrt{\rho}, \rho + 6\sqrt{\rho}}
\end{array}\right., 
\psi_2(r) = \left\{ \begin{array}{rl} 
0 & r \in \brac{\rho + 4\sqrt{\rho}, \rho + \tfrac{13}{3}\sqrt{\rho}} \\ 
\tfrac{1}{2} & r \in \brac{\rho + \tfrac{14}{3}\sqrt{\rho}, \rho + \tfrac{16}{3}\sqrt{\rho}} \\
1 & r \in \brac{ \rho + \tfrac{17}{3}\sqrt{\rho}, \rho + 6\sqrt{\rho}}
\end{array}\right.,$$ 
$$\psi_3\pr{r} = \left\{ \begin{array}{rl} 
1 & r \le \rho + \tfrac{17}{3}\sqrt{\rho} \\ 
0 & r \ge \rho + 5.9\sqrt{\rho} 
\end{array}\right.,
\psi_4\pr{r} = \left\{ \begin{array}{rl} 
0 & r \le \rho + 4.1\sqrt{\rho} \\ 
1 & r \ge \rho + \tfrac{13}{3}\sqrt{\rho} 
\end{array}\right..$$  
Furthermore, we require that (\ref{sumBd}) holds and that each cutoff function satisfy condition (\ref{1psi}). 

We set 
$$u = \psi_1u_4\exp\pr{\psi_4 \phi_{n+k, n+2k}} + \psi_2u_5\exp\pr{\psi_3 \phi_{n+k, n+2k}}.$$
For the rest of step 4, we will abbreviate $\phi_{n+k,n+2k}$ with $\phi$. \\
\\
\emph{Step 4A: $r \in \brac{\rho + 4\sqrt{\rho}, \rho + \tfrac{14}{3}\sqrt{\rho}} \cup \brac{\rho + \tfrac{16}{3}\sqrt{\rho}, \rho + 6\sqrt{\rho}}$}. \\
If $r \in \brac{\rho + 4\sqrt{\rho}, \rho + \tfrac{14}{3}\sqrt{\rho}}$, then since $\psi_3 = \psi_4$ on the support of $\psi_2$, then on this annulus
$$u =\psi_1 u_4 \exp\pr{\psi_4 \phi} + \psi_2 u_5 \exp\pr{\psi_3 \phi} = \brac{\psi_1 u_4 + \psi_2 u_5}\exp\pr{\psi_4 \phi},$$
and by (\ref{uBd3}),
\begin{equation}
\exp\pr{-\psi_4 \phi} |u| \ge \abs{\psi_1}|u_4| - \abs{\psi_2}|u_5| \ge \tfrac{1}{2}(1 - e^{-C})|u_4| \ge e^{C^\pri}|u_5| > 0.
\label{uBd5}
\end{equation}

If $r \in \brac{\rho + \tfrac{16}{3}\sqrt{\rho}, \rho + 6\sqrt{\rho}}$, then since $\psi_3 = \psi_4$ on the support of $\psi_1$, we have
$$u = \psi_1 u_4\exp\pr{\psi_4 \phi} +\psi_2 u_5 \exp\pr{\psi_3 \phi} = \brac{\psi_1 u_4 + \psi_2 u_5}\exp\pr{\psi_3 \phi}.$$
so that by (\ref{uBd4}),
\begin{equation}
\exp\pr{-\psi_3 \phi} |u| \ge\abs{\psi_2} |u_5| - \abs{\psi_1} |u_4| \ge \tfrac{1}{2}(1 -e^{-C})|u_5| \ge e^{-C^\pri}|u_4| > 0.
\label{uBd6}
\end{equation}
We have
\begin{align*}
\LP u + \la u&= \pr{D_4 + \tilde D_{4,j}}\exp\pr{\psi_j \phi},
\end{align*}
where $j = 4$ if $r \in \brac{\rho + 4\sqrt{\rho}, \rho + \tfrac{14}{3}\sqrt{\rho}}$ and $j = 3$ is $r \in \brac{\rho + \tfrac{16}{3}\sqrt{\rho}, \rho + 6\sqrt{\rho}}$.  Furthermore,
\begin{align}
&D_4 = \brac{\psi_1\tfrac{\la}{\sqrt{(n+2k)^2 - \la r^2}} + \psi_1^\pri \tfrac{1-2\sqrt{(n+2k)^2 - \la r^2}}{r} + \psi_1^{\pri\pri} }u_4 
+ \brac{\psi_2\tfrac{ \la}{\sqrt{(n+k)^2 - \la r^2}} + \psi_2^\pri \tfrac{1-2\sqrt{(n+k)^2 - \la r^2}}{r} + \psi_2^{\pri\pri}}u_5 \nonumber \\
&\abs{D_4} \le C r^{-1/2} \pr{\abs{u_4} + \abs{u_5}}
\label{D_4ord} \\
&\tilde D_{4,j} = 2\brac{\pr{\psi_1^\pri -\psi_1\tfrac{\sqrt{(n+2k)^2 - \la r^2}}{r}}u_4 + \pr{\psi_2^\pri - \psi_2\tfrac{\sqrt{(n+k)^2 - \la r^2}}{r}} u_5 }\pr{\psi_j^\pri\phi + \psi_j \phi^\pri} 
\nonumber \\
&+ \brac{\frac{1}{r}\pr{\psi_j^\pri \phi + \psi_j \phi^\pri} + \pr{\psi_j^\pri \phi + \psi_j \phi^\pri}^2 + \pr{\psi_j^{\pri\pri} \phi + 2\psi_j^\pri \phi^\pri+ \psi_j \phi^{\pri\pri}}}\pr{\psi_1 u_4 + \psi_2 u_5} 
\nonumber \\
&\abs{\tilde D_{4,j} } \le C\frac{\log r}{r^{1/2}} \pr{\abs{u_4} + \abs{u_5}}.
\label{tD_4ord}
\end{align}
Let $\disp V = \frac{{D_4+\tilde D_{4,j}}}{u}\exp\pr{\psi_j \phi}$.  By (\ref{uBd5}) and (\ref{uBd6}), $\disp \abs{\frac{u_4 \exp\pr{\psi_j \phi}}{u}}$ and $\disp \abs{\frac{u_5 \exp\pr{\psi_j \phi}}{u}}$ are bounded.  Therefore, by (\ref{D_4ord}) and (\ref{tD_4ord}), (\ref{Vbd}) holds.  
This completes step 4A. \\ 
\\
\emph{Step 4B: $r \in \brac{\rho + \tfrac{14}{3}\sqrt{\rho}, \rho + \tfrac{16}{3}\sqrt{\rho}}$}.

On this annulus, since all cutoff functions are equivalent to $1$ or $\tfrac{1}{2}$, $u = \tfrac{1}{2}\pr{u_4 + u_5}\exp\pr{\phi}$ and
\begin{align*}
\LP u + \la u &= \tfrac{1}{2}J_4u,
\end{align*}
where 
\begin{align} 
J_4 &=  \frac{\la}{\sqrt{(n+2k)^2 - \la r^2}} +  \frac{\la}{\sqrt{(n+k)^2 - \la r^2}} + \frac{\phi^\pri}{r} + \pr{\phi^\pri}^2 + \phi^{\pri\pri} = \bigO\pr{r^{-1}} ,
\label{J_4ord} 
\end{align}
\\
Then with $V = \tfrac{1}{2}J_4$, $\abs{V} \le C r^{-1}$.
\\ 
We now prove the last statement of the lemma.  We first define a function $M(r)$ that will help estimate $m(r) = \max\set{|u(r,\vp)| : 0 \le \vp \le 2\pi}$.  Let
$$ M(r) = \left\{ \begin{array}{ll} 
r^{-n}\mu_{n}( r )\exp\pr{\psi_4 \phi_{n, n-2k}} & r \in \brac{\rho, \rho + \sqrt{\rho}} \\
br^{-n+2k}\mu_{n-2k}( r )\exp\pr{\psi_3 \phi_{n, n-2k}} & r \in \brac{\rho + \sqrt{\rho}, \rho + 2\sqrt{\rho}} \\
br^{-n+2k}\mu_{n-2k}( r ) & r \in \brac{\rho + 2\sqrt{\rho}, \rho + 3\sqrt{\rho}} \\
br^{-n+2k}\mu_{n-2k}( r )h( r ) & r \in \brac{\rho + 3\sqrt{\rho}, \rho + 4\sqrt{\rho}} \\
b_1r^{-(n+2k)}\mu_{n+2k}( r )\exp\pr{\psi_4 \phi_{n+k, n+2k}} & r \in \brac{\rho + 4\sqrt{\rho}, \rho + 5\sqrt{\rho} }\\
ar^{-(n+k)}\mu_{n+k}( r )\exp\pr{\psi_3 \phi_{n+k, n+2k}} & r \in \brac{\rho + 5\sqrt{\rho}, \rho + 6\sqrt{\rho}} \end{array}\right..$$
Note that $M(r)$ is equal to the modulus of the functions $u_1, \ldots, u_5$ from which our solution $u(r, \vp)$ is constructed.  Also, $M(r)$ is a continuous, piecewise smooth function for which $M(\rho) = m(\rho)$.  By the new condition (\ref{sumBd}) on the cutoff functions, we also have that $m(r) \le M(r)$.  It follows that
$$\ln m(r) - \ln m(\rho) \le \ln M(r) - \ln M(\rho).$$
Furthermore,
\begin{align*}
\frac{d}{dr} \ln \pr{r^{-n - ak}\mu_{n+ak}\pr{r}} &= - \frac{n+ak}{r} + \frac{n+ak}{r}\pr{1 - \sqrt{1 - \frac{\la r^2}{\pr{n+ak}^2}}}
\end{align*}
Since $\psi_j^\pri = \bigO\pr{r^{-1/2}}$, $\phi = \bigO\pr{\log r}$, $\disp \phi^\pri\pr{r} = \bigO\pr{r^{-1}}$, and $h^\pri(r) = \bigO\pr{r^{-1/2}}$, then everywhere on $\brac{\rho, \rho + 6\sqrt{\rho}}$, except at a finite number of points where $M(r)$ is not differentiable, we have
$$\frac{d}{dr}\ln M(r) = \frac{-n+\bigO\pr{k}}{r} \sqrt{1 - \frac{\la r^2}{\pr{n+\bigO\pr{k}}^2}} + \bigO\pr{r^{-1/2}\log r} \le -c,$$
by the conditions on $n$, $k$, $r$.  Therefore,
$$ \ln m(r) - \ln m(\rho) \le  \int_\rho^r(\ln M(t))^\pri dt \le - c\int_\rho^r dt,$$
proving the lemma.
\end{pf}

\section{Proof of Lemma \ref{meshP}}
\label{lemP}

We will now present the proof of Lemma \ref{meshP}, the slightly less-complicated construction.  Many of the steps in this proof are identical to those in the proof of Lemma \ref{meshN}, so we will often refer to them.

\begin{pf}[Proof of Lemma \ref{meshP}]
As $r$ increases from $\rho$ to $\rho + 6\sqrt{\rho}$, we rearrange equation (\ref{WPDE}) and its solution $u$ so that all of the above conditions are met.  This process is broken down into four major steps.

\nid\emph{Step 1: $r \in \brac{\rho, \rho + 2\sqrt{\rho}}$}.  During this step, the function $u_1 = r^{-n}e^{-in\vp}\mu_{n}( r )$ is rearranged to a solution of the form $u_2 = -br^{-n+2k}e^{iF(\vp)}\mu_{n-2k}( r )$, both of which satisfy an equation of the form (\ref{WPDE}), where $b$ and $F$ are as in the proof of Lemma \ref{meshN}.

Choose smooth cutoff functions $\psi_1$, $\psi_2$ as in step 1 of the proof of Lemma \ref{meshN}.
We set 
$$u = \psi_1u_1 + \psi_2u_2.$$

\nid \emph{Step 1A: $r \in \brac{\rho, \rho + \tfrac{2}{3}\sqrt{\rho}} \cup $}.
On this annulus, by (\ref{uBd1}),
\begin{equation}
|u| \ge \abs{\psi_1}|u_1| - \abs{\psi_2}|u_2| \ge \tfrac{1}{2}(1 - e^{-C})|u_1| \ge e^{-C^\pri}|u_2| > 0.
\label{1Au}
\end{equation}
Let $W = w(i\sin\vp, -i\cos\vp)$ for $w$ to be determined.  Then
\begin{align*}
W\cdot\gr u &= w d_1 \psi_1 u_1,\\
\LP u + \la u &= D_1,
\end{align*}
where 
\begin{align} 
d_1 &= -\frac{n}{r} + \frac{n + 2k + \Phi^\pri}{r}\frac{\psi_2 u_2}{\psi_1 u_1} = \bigO\pr{1}, 
\label{d_1ord}
\end{align}
and $D_1$ is as in (\ref{D_1ord}).
Since $\disp \left| \frac{u_2}{u_1} \right| \le e^{-C}$ then $\disp d_1 \ne 0$ and $\disp |d_1| \ge C$.  Let $\disp w = \frac{D_1}{d_1 \psi_1 u_1}$ so that by (\ref{1Au}), (\ref{d_1ord}) and (\ref{D_1ord}), $|W| \le Cr^{-1/2}.$ 
This completes step 1A. \\ 
\\
\emph{Step 1B: $r \in \brac{\rho + \tfrac{4}{3}\sqrt{\rho}, \rho + 2\sqrt{\rho}}$}.
On this annulus, by (\ref{uBd2}),
\begin{equation}
|u| \ge \abs{\psi_2}|u_2| - \abs{\psi_1}|u_1| \ge \tfrac{1}{2}(1 - e^{-C})|u_2| \ge e^{-C^\pri}|u_1| > 0.
\label{1Bu}
\end{equation}
Let $W = w(i\sin\vp, -i\cos\vp)$ for $w$ to be determined.  Then
\begin{align*}
W\cdot \gr u &= we_1\psi_2 u_2, \\
\LP u + \la u &= D_1,
\end{align*}
where 
\begin{align} 
e_1 &= -\frac{n}{r}\frac{\psi_1}{\psi_2}\frac{u_1}{u_2} + \frac{n + 2k + \Phi^\pri}{r} = \bigO\pr{1}, 
\label{e_1ord}
\end{align}
and $D_1$ is as in (\ref{D_1ord}).
Since $\disp \abs{\frac{u_1}{u_2}} \le e^{-C}$ then $\disp e_1 \ne 0$ and $\disp |e_1| \ge C$.  Let $\disp w = \frac{E_1}{e_1 \psi_2 u_2}$ so that by (\ref{1Bu}), (\ref{e_1ord}), and (\ref{D_1ord}), $|W| \le Cr^{-1/2}.$
This completes step 1B. \\ 
\\ 
\emph{Step 1C: $r \in \brac{\rho + \tfrac{2}{3}\sqrt{\rho}, \rho + \tfrac{4}{3}\sqrt{\rho}}$.}

On this annulus, $\psi_1 \equiv \tfrac{1}{2} \equiv \psi_2$, so 
$$u = \tfrac{1}{2}\pr{u_1 + u_2} = \tfrac{1}{2}\brac{r^{-n}e^{-in\vp}\mu_n\pr{r} - br^{-n+2k}e^{iF(\vp)}\mu_{n-2k}( r )}.$$
Let $W = w_1(\cos \vp, \sin \vp) + w_2(i\sin\vp, -i\cos\vp)$ for $w_1$, $w_2$ to be determined.  Then
\begin{align*}
W\cdot \gr u &= \tfrac{1}{2}\brac{w_1j_1(0)u_1 - w_2j_2(0)u_1 + w_1j_1(2)u_2  + w_2\pr{j_2(2) + \tilde j_2}u_2}, \\
\LP u + \la u &= \tfrac{1}{2}\brac{J_1(0)u_1 + \pr{J_1(2) + \tilde J_1} u_2},
\end{align*}
where 
\begin{align} 
j_1(a) &= -\tfrac{\sqrt{\pr{n-ak}^2 - \la r^2}}{r} = \bigO(1), 
\label{j_11ord} \\
j_2(a) &= \tfrac{n+ak}{r} = \bigO(1), 
\label{j_12ord} \\
\tilde j_2 &= \tfrac{\Phi^\pri}{r} = \bigO(r^{-1/2}), 
\label{tj_12ord} \\
J_1(a) &=  \tfrac{\la}{\sqrt{\pr{n-ak}^2 - \la r^2}} = \bigO\pr{r^{-1}} ,
\label{J_1ordP} \\
\tilde J_1 &= - \tfrac{8nk + 2(n+2k)\Phi^\pri + (\Phi^\pri)^2 - i\Phi^{\pri\pri}}{r^2} = \bigO\pr{r^{-1/2}}.
\label{tJ_1ord}
\end{align}
\\
 If we let
 \begin{align*}
 w_1 &= \frac{\pr{j_2(2) + \tilde j_2} J_1(0) + j_2(0) \pr{J_1(2) + \tilde J_1}}{j_1(0) \pr{j_2(2) + \tilde j_2} + j_2(0) j_1(2)} 
 = \bigO\pr{r^{- 1/2}} \\
 w_2 &= \frac{j_1(0) \pr{J_1(2) + \tilde J_1} - j_1(2) J_1(0)}{j_1(0) \pr{j_2(2) + \tilde j_2} + j_2(0) j_1(2)}\pr{J_1(2) + \tilde J_1}
 = \bigO\pr{r^{- 1/2}}, \\
 \end{align*}
then (\ref{WPDE}) is satisfied.   It follows that $|W| \le Cr^{-1/2}$.  This completes step 1C. \\ 
\\
\emph{Step 2: $r \in \brac{\rho + 2\sqrt{\rho}, \rho + 3\sqrt{\rho}}$}. The solution $u_2 = -br^{-n+2k}e^{iF(\vp)}\mu_{n-2k}( r )$ is rearranged to $u_3 = -br^{-n+2k}e^{i(n+2k)\vp}\mu_{n-2k}( r )$ by setting $u = -br^{-n+2k}\exp i\brac{\psi(r)\Phi(\vp) + (n+2k)\vp}\mu_{n-2k}( r )$, as in the proof of Lemma \ref{meshN}.  Let $W = w(\cos\vp, \sin\vp)$ for $w$ to be determined.  Then
\begin{align*}
W\cdot\gr u &= wd_2u, \\
\LP u + \la u &= D_2 u,
\end{align*}
where
\begin{align}
d_2 &=-\frac{\sqrt{(n-2k)^2 - \la r^2}}{r} + i\psi^\pri \Phi= \bigO\pr{1}
\label{d_2ord},
\end{align}
and $D_2$ is as in (\ref{D_2ord}).
By the conditions on $n$ and $k$, $d_2 \ne 0$ so $|d_2| \ge C$.  Let $\disp w = \frac{D_2}{d_2}$ so that by (\ref{d_2ord}) and (\ref{D_2ord}), $|W| \le Cr^{-1/2}.$  
This completes step 2. \\ 
\\
\emph{Step 3: $r \in \brac{\rho + 3\sqrt{\rho}, \rho + 4\sqrt{\rho}}$}.  The solution $u_3 = -br^{-n+2k}e^{i(n+2k)\vp}\mu_{n-2k}( r )$ is rearranged to $u_4 = -b_1r^{-n-2k}e^{i(n+2k)\vp}\mu_{n+2k}( r )$ by setting $u = u_3\brac{\psi + (1-\psi)g} = u_3 h$, as in Lemma \ref{meshN}.
Let $W = w(\cos\vp, \sin\vp)$ for some $w$ to be determined.  Then
\begin{align*}
W\cdot \gr u & = wd_3u, \\
\LP u + \la u &= D_3 u,
\end{align*}
where
\begin{align}
d_3 &= -\frac{\sqrt{(n-2k)^2 - \la r^2}}{r} + \frac{h^\pri}{h} = \bigO\pr{1},
\label{d_3ord}
\end{align}
and $D_3$ is as in (\ref{D_3ord}).  By (\ref{hpBd}) and the conditions on $n$ and $k$, $d_3 \ne 0$ so that $|d_3| \ge C$.  Let $\disp w = \frac{D_3}{d_3}$ so that by (\ref{d_3ord}) and (\ref{D_3ord}), $|W| \le Cr^{-1/2}.$ 
This completes step 3. \\ 
\\
\emph{Step 4: $r \in \brac{\rho + 4\sqrt{\rho}, \rho + 6\sqrt{\rho}}$}.  The solution $u_4 = -b_1r^{-n-2k}e^{i(n+2k)\vp}\mu_{n+2k}( r )$ is rearranged to $u_5 = ar^{-(n+k)}e^{-i(n+k)\vp}\mu_{n+k}( r )$.
Choose smooth cutoff functions $\psi_1$ and $\psi_2$ as in Lemma \ref{meshN} and set
$$u = \psi_1u_4 + \psi_2u_5.$$
\emph{Step 4A: $r \in \brac{\rho + 4\sqrt{\rho}, \rho + \tfrac{14}{3}\sqrt{\rho}}$}.

On this annulus, by (\ref{uBd3}),
\begin{equation}
|u| \ge \abs{\psi_1}|u_4| - \abs{\psi_2}|u_5| \ge \tfrac{1}{2}(1 - e^{-C})|u_4| \ge e^{-C^\pri}|u_5| > 0.
\label{uBd5P}
\end{equation}
Let $W = w(i\sin\vp, -i\cos\vp)$ for $w$ to be determined.  Since $\psi_1 \equiv 1$ in this annulus, we have
\begin{align*}
W\cdot \gr u &= wd_4 \psi_1 u_4, \\
\LP u + \la u&= D_4,
\end{align*}
where
\begin{align}
d_4 &=  \frac{n+2k}{r}  - \frac{n+k}{r}\frac{\psi_2}{\psi_1}\frac{u_5}{u_4} = \bigO\pr{1},
\label{d_4ord}
\end{align}
and $D_4$ is as in (\ref{D_4ord}).  Since $\disp \abs{\frac{u_5}{u_4}} \le e^{-C}$, then $|d_4| \ge C$.  Let $\disp w = \frac{D_4}{d_4  \psi_1 u_4}$ so that by (\ref{uBd5P}), (\ref{d_4ord}), and (\ref{D_4ord}), $|W| \le Cr^{-1/2}.$
This completes step 4A. \\ 
\\
\emph{Step 4B: $r \in \brac{\rho + \tfrac{16}{3}\sqrt{\rho}, \rho + 6\sqrt{\rho}}$}.

On this annulus, by (\ref{uBd4}),
\begin{equation}
|u| \ge \abs{\psi_2}|u_5| - \abs{\psi_1}|u_4| \ge \tfrac{1}{2}(1 -e^{-C})|u_5| \ge e^{-C^\pri}|u_4| > 0.
\label{uBd6P}
\end{equation}
Let $W = w(i\sin\vp, -i\cos\vp)$ for $w$ to be determined.  Since $\psi_2 \equiv 1$ in this annulus, we have
\begin{align*}
W\cdot \gr u &= we_4 \psi_2 u_5, \\
\LP u + \la u &= D_4 ,
\end{align*}
where
\begin{align}
e_4 &= \frac{n+2k}{r}\frac{\psi_1}{\psi_2}\frac{u_4}{u_5}  - \frac{n+k}{r} = \bigO\pr{1},
\label{e_4ord}
\end{align}
and $D_4$ is as in (\ref{D_4ord}).  Since $\disp \abs{\frac{u_4}{u_5}} < e^{-C}$, then $|e_4| \ge C$.  Let $\disp w = \frac{E_4}{e_4 \psi_2 u_5}$ so that by (\ref{uBd6P}), (\ref{e_4ord}) and (\ref{D_4ord}), $|W| \le Cr^{-1/2}.$
This completes step 4B. \\
\\ 
\emph{Step 4C: $r \in \brac{\rho + \tfrac{14}{3}\sqrt{\rho}, \rho + \tfrac{16}{3}\sqrt{\rho}}$}.

On this annulus, $u = \tfrac{1}{2}\pr{u_4 + u_5}$.
Let $W = w_1(\cos \vp, \sin \vp) + w_2(i\sin\vp, -i\cos\vp)$ for $w_1$, $w_2$ to be determined.  Then
\begin{align*}
W\cdot \gr u &= \tfrac{1}{2}\brac{w_1j_1(-2)u_4 + w_2j_2(2)u_4 + w_1j_1(-1)u_5  -w_2j_2(1)u_5}, \\
\LP u + \la u &= \tfrac{1}{2}\brac{J_1(-2)u_4 + J_1(-1) u_5},
\end{align*}
where $j_1$, $j_2$ and $J_1$ are given by (\ref{j_11ord}), (\ref{j_12ord}) and (\ref{J_1ordP}), respectively.
 If we let
 \begin{align*}
 w_1 &= \frac{j_2(1) J_1(-2) + j_2(2) J_1(-1)}{j_1(-2) j_2(1) + j_2(2) j_1(-1)} 
 = \bigO\pr{r^{-1}} \\
 w_2 &= \frac{ j_1(-1) J_1(-2) - j_1(-2) J_1(-1)}{ j_1(-2) j_2(1) + j_2(2) j_1(-1)} 
 = \bigO\pr{r^{-1}},
 \end{align*}
then (\ref{WPDE}) is satisfied with $\abs{W} \le C r^{-1/2}$.  This completes step 4C. \\
\\ 
The remainder of the proof is very similar to (but simpler) than that of Lemma \ref{meshN}.
\end{pf}

\section{The proof of Lemma \ref{meshNX}}
\label{lemNX}

The proof of Lemma \ref{meshNX} is nearly identical to that of Lemma \ref{meshN}; we simply need to show that the logarithmic factor may be removed from estimate (\ref{Vbd}) to get (\ref{VXbd}).  

\begin{proof}[Proof of Lemma \ref{meshNX}]
We must show that $\abs{\tilde D_{1,j}}, \le C r^{-1/2} \pr{\abs{u_1} + \abs{u_1}}$ and $\abs{\tilde D_{4,j}} \le C r^{-1/2} \pr{\abs{u_4} + \abs{u_5}}$.  Once this is established, since all other estimates for $\abs{V}$ satisfy (\ref{Vbd}), the proof of Lemma \ref{meshN} applies and the construction is complete.

Recall that
\begin{align*} 
\tilde D_{1,j} =& 2\brac{\pr{\psi_1^\prime -\psi_1 \tfrac{\sqrt{n^2 - \la r^2}}{r}}u_1 +\pr{\psi_2^\pri - \psi_2\tfrac{\sqrt{\pr{n-2k}^2 - \la r^2}}{r}}u_2 }\pr{\psi_j^\pri \phi + \psi_j \phi^\pri}  \\
&+ \brac{\frac{1}{r}\pr{\psi_j^\pri \phi + \psi_j \phi^\pri} + \pr{\psi_j^\pri \phi + \psi_j \phi^\pri}^2 + \pr{\psi_j^{\pri\pri} \phi + 2\psi_j^\pri \phi^\pri+ \psi_j \phi^{\pri\pri}}}\pr{\psi_1 u_1 + \psi_2 u_2 }  \\
\tilde D_{4,j} &= 2\brac{\pr{\psi_1^\pri -\psi_1\tfrac{\sqrt{(n+2k)^2 - \la r^2}}{r}}u_4 + \pr{\psi_2^\pri - \psi_2\tfrac{\sqrt{(n+k)^2 - \la r^2}}{r}} u_5 }\pr{\psi_j^\pri\phi + \psi_j \phi^\pri}  \\
&+ \brac{\frac{1}{r}\pr{\psi_j^\pri \phi + \psi_j \phi^\pri} + \pr{\psi_j^\pri \phi + \psi_j \phi^\pri}^2 + \pr{\psi_j^{\pri\pri} \phi + 2\psi_j^\pri \phi^\pri+ \psi_j \phi^{\pri\pri}}}\pr{\psi_1 u_4 + \psi_2 u_5}
\end{align*}
Since $n = \big\lfloor \sqrt{\la} \pr{\rho + 8 \sqrt{\rho}} \big\rfloor$, $k \sim 12\sqrt{\la}\sqrt{\rho}$, and $r \in \brac{\rho, \rho + 6 \sqrt{\rho}}$, then $\disp \frac{\sqrt{\pr{n+ak}^2 - \la r^2}}{r} = \bigO\pr{r^{-1/4}}$, and the result follows.
\end{proof}

\nid \textbf{Acknowledgement} The author is extremely grateful to her advisor, Carlos Kenig, for his  support and encouragement.

\bibliography{refs}
\bibliographystyle{chicago}

\end{document}